\colorlet{linkcolour}{green!40!black}
\colorlet{urlcolour}{green!40!black}
\newtheorem{theorem}{Theorem}[section]
\newtheorem{lemma}{Lemma}[section]
\theoremstyle{definition}
\newtheorem{definition}{Definition}[section]
\newtheorem{example}{Example}[section]
\def\g{\mathfrak{g}}
\def\eqalign#1{\null\,\vcenter{\openup\jot \mathsurround=0pt \ialign{\strut
     \hfil$\displaystyle{##}$&$ \displaystyle{{}##}$\hfil \crcr#1\crcr}}\,}
\def\tr{\mathop{\mathrm{tr}}}
\def\tphi{\widetilde{\varphi}}
\def\Ad{\mathrm{Ad}}
\begin{document}
\title{Collective symplectic integrators}
\author{Robert I McLachlan$^1$, Klas Modin$^2$ and Olivier Verdier$^3$}
\address{$^1$Institute of Fundamental Sciences, Massey University,
	Private Bag 11 222, Palmerston North 4442, New Zealand }
\address{$^2$Department of Mathematical Sciences, Chalmers University of Technology, Gothenburg, Sweden }
\address{$^3$Mathematics and Mathematical Statistics,
Ume\aa~Universitet, SE--901 87 Ume\aa, Sweden}
\eads{\mailto{r.mclachlan@massey.ac.nz},
\mailto{klas.modin@chalmers.se},
\mailto{olivier.verdier@gmail.com}}

\def\R{\mathbb{R}}
\def\F{{\mathcal F}}
\def\C{\mathbb{C}}
\begin{abstract}
\noindent
We construct symplectic integrators for Lie-Poisson systems. The integrators are standard symplectic (partitioned) Runge--Kutta methods. Their phase space is a symplectic vector space with a Hamiltonian action with momentum map $J$ whose range is the target Lie--Poisson manifold, and their Hamiltonian is collective, that is, it is the target Hamiltonian pulled back by $J$. The method yields, for example, a symplectic midpoint rule expressed in 4 variables for arbitrary Hamiltonians on $\mathfrak{so}(3)^*$. The method specializes in the case that a sufficiently large symmetry group acts on the fibres of $J$, and generalizes to the case that the vector space carries a bifoliation. Examples involving many classical groups are presented.
\end{abstract}

\ams{37M15,37J15,65P10}

\submitto{Nonlinearity}

\section{Introduction: Symplectic integrators for canonical and noncanonical Hamiltonian systems}

A Hamiltonian system on a symplectic manifold is defined by $(M,\omega,H)$ where $M$ is a manifold, $\omega$ is a symplectic form, and $H\colon M\to\R$ is a Hamiltonian; the associated Hamiltonian vector field $X_H$ is defined by $i_{X_H} \omega = dH$, or, in local coordinates $z$ in which $\omega =\frac{1}{2} dz\wedge \Omega dz$, $\dot z = X_H(z)$ with $\Omega X_H = \nabla H$. A symplectic integrator for $X_H$ is a 1-parameter family of symplectic maps $\varphi_{\Delta t}\colon M\to M$ such that $\varphi_0 = id$ and $\left.\left(\frac{d}{dt}\varphi_{ t}\right)\right|_{t = 0} = X_H$. Symplectic integrators are known in only a few cases, the main ones being \cite{ha-lu-wa}
\begin{enumerate}
\item When $(M,\omega)$ is a symplectic vector space, many Runge--Kutta methods are symplectic. The midpoint rule $\varphi_{\Delta t}\colon z_0\mapsto z_1$, $\Omega (z_1-z_0)/\Delta t = \nabla H((z_0+z_1)/2)$ is an example.
\item When $(M,\omega$) is a symplectic vector space, then given any choice of  Darboux coordinates $(q,p)$ on $M$, many partitioned Runge--Kutta methods are symplectic. (Essentially, because $\omega = dq\wedge dp$ is now linear in $q$ and $p$, one can apply any Runge--Kutta method to the $q$ component and then determine the $p$ component by symplecticity.)
\item When $H$ can be written as $H=\sum_i H_i$ such that each split vector field $X_{H_i}$ can be integrated exactly, then compositions of their flows provide symplectic integrators.
\item When $M = T^*Q$ with its canonical symplectic form, if the configuration space $Q$ is embedded in a linear space $N$  by constraints (i.e., $Q = \{q\in N: g(q)=0\}$), then constrained symplectic integrators such as {\sc rattle} are known; the algorithm is expressed in coordinates on the linear space $T^*N$ but is designed in such a way that it induces a symplectic integrator on $M$.
\end{enumerate}

A Hamiltonian system on a Poisson manifold is defined by $(P,\{,\},H)$ where $P$ is a manifold, $\{,\}$ is a Poisson bracket, and $H\colon M\to \R$ is a Hamiltonian. Poisson maps $\varphi\colon P\to P$ are those that preserve the Poisson bracket, i.e. $\{F,G\}\circ\varphi = \{F\circ\varphi,G\circ\varphi\}$ for all $F,G\colon P\to \R$. Poisson manifolds are foliated into symplectic leaves, and often these leaves are the level sets of Casimirs, functions $C$ such that $\{C,F\}=0$ for all $F$. Poisson maps are those that preserve the foliation and pull back the symplectic form on the target leaf to the symplectic form on the source leaf. Hamiltonian vector fields are defined by their action on functions by $\dot F = \{F,H\}$. In local coordinates with $\{F,G\} = \nabla F^T K(z) \nabla G$, $X_H(z) = K(z)\nabla H(z)$. The flow of a Hamiltonian vector field is Poisson and, in addition, fixes each leaf. If there are Casimirs, then they are first integrals of $X_H$ for any $H$. Some main classes of Poisson manifolds are (i) symplectic manifolds (the case that $P$ has a single symplectic leaf); (ii) $P$ a vector space with constant Poisson tensor $K$; and (iii) $P$ a vector space with linear Poisson bracket. In this case $P$ is a Lie--Poisson manifold and we have $P=\mathfrak{g}^*$ where $\mathfrak{g}$ is a Lie algebra and $\{F,G\}(z) = z([dF,dG])$ for all $z\in\mathfrak{g}^*$. In this case the symplectic leaves of $P$ are coadjoint orbits of $G$ in $\mathfrak{g}^*$ and they form important classes of symplectic manifolds.

Symplectic integrators for $(P,\{,\},H)$ are one-parameter families of Poisson maps $\varphi_{\Delta t}:P\to P$ such that $\varphi_0 = id$ and $(\frac{d}{dt}\varphi_{t})|_{t=0} = X_H$ and, in addition, fix each leaf. These are only known in a few cases. 
\begin{enumerate}
\item When $P$ is a vector space with constant Poisson bracket, symplectic Runge--Kutta methods are Poisson integrators for any $H$ \cite{mc1}.
\item When $P=\mathfrak{g}^*$ is Lie--Poisson, many Hamiltonians (for example polynomials) can be split into integrable pieces \cite{mc-qu}.
\item When $P=\mathfrak{g}^*$ is Lie--Poisson, then $X_H$ is the Marsden--Weinstein reduction of a canonical Hamiltonian system on $T^*G$. Symplectic integrators can be constructed for any $H$ by either (i) constructing a discrete Lagrangian in $TG$ using the exponential map to provide local coordinates on $G$ \cite{ch-sc,ma-pe-sh} or (ii) embedding $G$ in a linear space and using a constrained symplectic integrator such as {\sc rattle} \cite{mc-sc}.
\end{enumerate}

However, the integrators from Case 3 are extremely complicated, involving solving implicit equations in Lie groups,  infinite series of Lie brackets, and/or using an excessive number of degrees of freedom. For example, to integrate on the 2-dimesional sphere, approach 3(ii) would realize $S^2$ as a coadjoint orbit in the 3-dimensional $\mathfrak{so}(3)^*$, lift to $T^*SO(3)$ (6-dimensional) and embed this in $T^*\R^{3\times 3}$ (18-dimensional). What is wanted is an approach to constructing symplectic integrators  that leads to simple methods, that works for any $H$, and that uses few extra variables. We achieve this at the price of 
some extra work beforehand that depends on $P$. 


\section{Collective symplectic integrators for Lie--Poisson systems}

Throughout this section $M = (M,\omega)$ denotes a symplectic manifold and $P = (P,\{,\})$ denotes a Poisson manifold.

\subsection{Realizations and collective symplectic maps}

\begin{definition}
A \emph{realization} of $P$ is a Poisson map $\psi\colon M\to P$. 
A realization $\psi$ is called \emph{full} if $\psi$ is surjective.
If $(M,\omega)=(T^*\R^n,dq^i\wedge dp_i)$ then $(q^i,p_i)$ are called {\em canonical} or {\em Clebsch} variables for $P$. 
The \emph{fibres} of a realization $\psi$ are the subsets of $M$ given by $\psi^{-1}(\{x\})$ with $x\in P$.
\end{definition}

\begin{definition}
Let $\psi\colon M\to P$ be a realization of $P$.
A real valued function on $M$ of the form $H\circ\psi$ for some $H\colon P\to \R$ is called a \emph{collective Hamiltonian}. 
A map $\varphi\colon M\to M$ is {\em collective} if there is a map $\widetilde\varphi\colon P\to P$ such that $\widetilde\varphi\circ \psi = \psi \circ \varphi$. We say the map $\varphi$
{\em descends} to $\widetilde\varphi$ and that $\widetilde\varphi$ is the {\em reduced map} of $M$; similarly for vector fields.  
\end{definition}

Note that $\varphi\colon M\to M$ is collective if and only if it maps fibres of $\psi$ to fibres of $\psi$. If $\varphi$ is collective, the reduced map $\widetilde\varphi$ is only uniquely defined on the range of $\psi$.

Since a realization $\psi$ is a Poisson map, we have $ \{F\circ\psi,H\circ\psi\} = \{F,H\}\circ\psi$. The right hand side is collective, that is, constant on fibres. 
The left hand side is the Lie derivative along $X_{H\circ\psi}$ of the function $F\circ\psi$ that is constant on fibres. Therefore, the flow of $X_{H\circ\psi}$ maps fibres to fibres, or, put another way, is $\psi$-related to $X_H$ \cite{ma-ra}. If $\psi$ is surjective, the vector field $X_{H\circ\psi}$ descends to a vector field on $P$, namely $X_H$.

Realizations and collective Hamiltonians are a long-established tool in Hamiltonian dynamics. Key references are \cite{gu-st,li-ma,weinstein}  and especially \cite{ma-we} which is the main inspiration for our approach. Essentially all of the required geometry is in \cite{ma-we} and our contribution is to find conditions under which that framework can be useful for constructing integrators.

\begin{definition}
A {\em collective symplectic integrator} for $(P,\{,\},H)$ is a full realization of $P$ together with a symplectic integrator for $(M,\omega,H\circ \psi)$ that descends to a symplectic integrator for $(P,\{,\},H)$. A {\em collective symplectic map} for $(P,\{,\})$ is a full realization of $P$ together with a symplectic map of $(M,\omega)$ that descends to a symplectic (i.e. Poisson and fibre-preserving) map of $(P,\{,\})$.
\end{definition}  

In the most general case, this merely swaps one hard problem (constructing symplectic integrators on $P$) for three hard problems (finding full realizations, finding fibre-preserving symplectic integrators so that the integrator descends to $P$, and ensuring that the reduced integrator preserves the symplectic leaves). Note if $P$ has a Casimir $C$, then $C\circ\psi$ is a first integral of $X_{H\circ\psi}$, but preserving arbitrary integrals of a Hamiltonian system is difficult.

We now let $P=\mathfrak{g}^*$ with its Lie--Poisson structure. An action of a Lie group $G$  on $M$ is said to be (globally) Hamiltonian if it has a momentum map $J\colon M\to\mathfrak{g}^*$ that is equivariant with respect to the coadjoint action of $G$ on $\mathfrak{g}^*$, i.e.,
\begin{equation}
\label{eq:equi}
 J(g\cdot x) = \Ad_{g^{-1}}^* J(x)
\end{equation}
for all $x\in M$, $g\in G$ \cite{ma-ra}. $J$ is then Poisson and the image $J(M)$ is a union of (open subsets of) coadjoint orbits which are the symplectic leaves of $\g^*$. In this case $M$ is called a {\em Hamiltonian $G$-space}. 
In other words, any Hamiltonian action of $G$ on $M$ provides a realization $\psi\colon= J$ of $\mathfrak{g}^*$, although it need not be full.
Conversely, any Poisson map from $M$ to $\mathfrak{g}^*$ must be the momentum map for some
Hamiltonian group action on $M$ \cite{ma-we}. Thus, in the case $P=\mathfrak{g}^*$, we do not lose any generality by restricting attention in our search for realizations to Hamiltonian $G$-spaces.

The Hamiltonian vector fields of collective Hamiltonians can be calculated in this case using 
the result of \cite{gu-st} (page 215) that 
$X_{H\circ J}(x)=\xi_{dH(x)}$ where $\xi_p$ is the infinitesimal generator of the $G$-action associated with $p\in\g$; in vector notation, 
\begin{equation*}
 	X_{H\circ J}(x) = \Omega^{-1} (TJ(x))^T (\nabla H)(J(x)).
\end{equation*}

According to Weinstein \cite{weinstein}, the minimum dimension of a realization of a neighbourhood of $z\in\g^*$ is equal $\dim \g+\dim\g_z$, which can be as large as $2\dim\g$ (by taking $z=0$). He constructs such a representation, but it is not canonical. 

We will present in this section two general approaches to the construction of collective symplectic integrators. The first uses only the basic data $M$ and $J$ and seeks  conditions under which a symplectic integrator applied to $X_{H\circ J}$ is collective. We call this the direct approach. The second uses additional structure, a ``symmetry'' group $G_2$ that acts on the fibres of $J$; we call this the symmetry approach.

\subsection{The direct approach}

\begin{theorem}
\label{thm:onegroup}
Let $G$ be a Lie group with a Hamiltonian action on $M$ whose momentum map $J\colon M\to \mathfrak{g}^*$ has connected fibres. Let $\varphi\colon M\to M$ be a symplectic map.
\begin{enumerate}
\item[\rm (i)] If $\varphi$ maps $G$-orbits to $G$-orbits then it descends to a map on $ J(M)\subset\mathfrak{g}^*$ and that reduced map is (Lie--)Poisson. 
\item[\rm (ii)] If $\varphi$ fixes each orbit of $G$  then the reduced map preserves the coadjoint orbits.
\end{enumerate}
\end{theorem}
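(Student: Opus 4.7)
The plan rests on a classical orthogonality: at every $x\in M$, the defining relation $dJ^\xi = i_{\xi_M}\omega$ for the momentum map identifies $\ker T_xJ$ with the $\omega$-annihilator of $T_x(G\cdot x)$. Since $\varphi$ is symplectic, its tangent map commutes with $\omega$-orthogonal complements, and that is the only ingredient needed apart from connectedness of fibres and the Poisson property of $J$.

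For part (i), I would first argue that $J\circ\varphi$ is constant on each fibre of $J$. Given $v\in\ker T_xJ$, we have $T_x(J\circ\varphi)\cdot v = T_{\varphi(x)}J\bigl(T_x\varphi\cdot v\bigr)$. The hypothesis sends $T_x(G\cdot x)$ into $T_{\varphi(x)}(G\cdot\varphi(x))$, and taking $\omega$-orthogonals gives $T_x\varphi(\ker T_xJ)\subset\ker T_{\varphi(x)}J$, so $T(J\circ\varphi)$ vanishes along fibres. Connectedness then upgrades this to constancy on each fibre, producing a well-defined reduced map $\widetilde{\varphi}\colon J(M)\to J(M)$ with $\widetilde{\varphi}\circ J=J\circ\varphi$. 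To see $\widetilde{\varphi}$ is Poisson, I would test with arbitrary $F,G\colon\g^*\to\R$ and combine symplecticity of $\varphi$ with the Poisson property of $J$:
$$ (\widetilde{\varphi}^*\{F,G\})\circ J \;=\; \varphi^*\{F\circ J,\,G\circ J\} \;=\; \{\varphi^*(F\circ J),\,\varphi^*(G\circ J)\} \;=\; \{\widetilde{\varphi}^*F,\,\widetilde{\varphi}^*G\}\circ J, $$
after which surjectivity of $J$ onto $J(M)$ cancels the $J$'s and yields the Poisson identity on $J(M)$.

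For part (ii), I would invoke equivariance \eqref{eq:equi}: it immediately says $J$ carries the $G$-orbit through $x$ into the coadjoint orbit through $J(x)$. If $\varphi$ fixes the orbit $G\cdot x$ then $\varphi(x)=g\cdot x$ for some $g\in G$, so $\widetilde{\varphi}(J(x)) = J(\varphi(x)) = \Ad_{g^{-1}}^*J(x)$ lies in the coadjoint orbit through $J(x)$. This is precisely preservation of coadjoint orbits by $\widetilde{\varphi}$.

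The main obstacle is the descent step in (i). The hypothesis that $\varphi$ maps $G$-orbits to $G$-orbits yields, via the orthogonality, only the infinitesimal statement that $T(J\circ\varphi)$ annihilates the fibre distribution. Without connected fibres, $\varphi$ could permute distinct connected components of a single level set of $J$, and the reduced map on $J(M)$ would not be single-valued. The connected-fibres hypothesis is exactly what converts the infinitesimal orthogonality argument into the global descent needed for a well-defined $\widetilde{\varphi}$; everything else — the Poisson property in (i) and the orbit preservation in (ii) — is then formal.
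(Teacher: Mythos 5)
Your argument is correct and follows essentially the same route as the paper's: the fibre distribution of $J$ is the symplectic orthogonal of the orbit distribution, a symplectic map that respects orbits therefore respects the fibre distribution, connectedness of the fibres upgrades this infinitesimal statement to genuine descent, and the Poisson property in (i) and the coadjoint-orbit preservation in (ii) follow by the same formal computations using that $J$ is Poisson and equivariant. The one step to tighten is the orthogonality argument: from a mere inclusion $T\varphi\bigl(T_x(G\cdot x)\bigr)\subseteq T_{\varphi(x)}\bigl(G\cdot\varphi(x)\bigr)$, taking $\omega$-orthogonals reverses the inclusion and gives the wrong direction, so you need (as the paper's proof makes explicit) that $T\varphi$ carries the orbit tangent space \emph{onto} that of the image orbit, which holds because $\varphi$ is a diffeomorphism mapping orbits onto orbits.
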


\begin{proof}\ \\[-5mm]
\begin{enumerate}
\item[(i)]
The map $\varphi$ preserves the orbits of $G$, so it preserves the distribution tangent to the orbits, $D_x := T(G\cdot x)$. The map $\varphi$  is symplectic, hence invertible, so $T\varphi|_{D_x}\colon D_x \to D_{\varphi(x)}$ is a linear isomorphism. The symplectic orthogonal to $D_x$ is the distribution tangent to the fibres of $J$ and is also preserved by $\varphi$, because $ \omega(T\varphi.u,T\varphi.v) = \omega(u,v)=0$ for all $v\in D_x$, $u\in D_x^\perp$, and $T\varphi.D_x = D_{\varphi(x)}$, so $T\varphi.u\in D_{\varphi(x)}^\perp$. Because the fibres of $J$ are assumed to be connected, $\varphi$ maps fibres of $J$ to fibres of $J$, that is, it descends to $J(M)$.
The reduced map $\tphi$ satisfies $J\circ\varphi = \tphi\circ J$. For all functions $F$, $G$ on $\mathfrak{g}^*$ we have
$
\{F,G\}\circ\tphi\circ J = \{F,G\}\circ J\circ \varphi
= \{F\circ J,G\circ J\}\circ\varphi
= \{F\circ J\circ \varphi,G\circ J\circ\varphi\}
= \{F\circ\tphi\circ J,G\circ\tphi\circ J\}
= \{F\circ \tphi,G\circ\tphi\}\circ J$
and therefore $\{F,G\}\circ\tphi=\{F\circ \tphi,G\circ\tphi\}$, that is, $\tphi$ is (Lie--)Poisson.
\item[(ii)] If $\varphi$ fixes each orbit of $G$ then $\varphi(x)$ lies in the orbit $G\cdot x$, that is, $\varphi(x) = g\cdot x$ for some $g\in G$. Therefore from (\ref{eq:equi}), $J(\varphi(x))=\Ad^*_{g^{-1}} J(x)$ and the reduced map preserves the coadjoint orbits.
\end{enumerate}
\end{proof}

\newcommand*\meth{\widetilde{\varphi}}
\newcommand*\Adg[1]{\Ad^*_{#1}}
\newcommand*\inv{^{-1}}

If an integrator $\varphi$ defines a map from vector fields $X$ to diffeomorphisms $\varphi(X)$, the integrator is said
to be {\em $G$-equivariant} if this map is $G$-equivariant with respect to the action of $G$ on vector fields and on diffeomorphisms, i.e., if $\varphi(Tg\inv .X\circ g) = g^{-1}\circ \varphi(X)\circ g$ for all $g\in G$. For example, all Runge--Kutta methods are affine-equivariant. If the action of $G$ is symplectic, then $Tg\inv .X_H\circ g = X_{H\circ g}$ so in this
case $\varphi(X_{H\circ g} = g^{-1}\circ \varphi(X_H)\circ g$. Our next result shows that equivariance descends to $J(M)$.

\begin{theorem}
	Let $G$ be a Lie group with Hamiltonian action on $M$ with momentum map $J$.
	For a vector field $X$, we denote by $\varphi(X)$ the corresponding diffeomorphism for the method $\varphi$.
	Assume that $\varphi$ is a $G$-equivariant integrator on $M$ such that, for each Hamiltonian $H$ defined on $J(M)$, $\varphi(X_{H\circ J})$ descends on $J(M)$ to a map denoted $\meth(H)$.
	Then the map $\meth$ is \emph{equivariant} in the sense that 
	$\meth(H \circ \Ad^*_{g^{-1}}) = \Ad^*_{g} \circ \meth(H) \circ \Ad^*_{g^{-1}}$
	for any $g \in G$.
\end{theorem}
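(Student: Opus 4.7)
The plan is to lift the identity to be proved up to $M$ via $J$, use coadjoint equivariance of $J$ to trade the $\Ad^*$-action on $\g^*$ for the $G$-action on $M$, and then exploit $G$-equivariance of the integrator $\varphi$ to rewrite the resulting conjugation on $M$; finally, descend back to $J(M)$ using the hypothesis that $\varphi(X_{H \circ J})$ descends.

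First, I rewrite the lifted Hamiltonian. Coadjoint equivariance of $J$, equation (\ref{eq:equi}), gives $J \circ g = \Ad^*_{g^{-1}} \circ J$, so
\begin{equation*}
(H \circ \Ad^*_{g^{-1}}) \circ J = H \circ J \circ g = (H \circ J) \circ g.
\end{equation*}
Because the $G$-action on $M$ is symplectic, this identity lifts to Hamiltonian vector fields:
\begin{equation*}
X_{(H \circ \Ad^*_{g^{-1}}) \circ J} = X_{(H \circ J) \circ g} = Tg^{-1} \cdot X_{H \circ J} \circ g,
\end{equation*}
using the standard fact $Tg^{-1}\cdot X_F\circ g = X_{F\circ g}$ for symplectic $g$ recalled just before the theorem.

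Second, I apply $G$-equivariance of $\varphi$, which by definition gives $\varphi(Tg^{-1}\cdot X\circ g) = g^{-1}\circ\varphi(X)\circ g$. Combined with the previous step,
\begin{equation*}
\varphi(X_{(H \circ \Ad^*_{g^{-1}}) \circ J}) = g^{-1} \circ \varphi(X_{H \circ J}) \circ g.
\end{equation*}
Composing with $J$ on the left, using the descent identity $J \circ \varphi(X_{H \circ J}) = \meth(H) \circ J$ on the right-hand side, and then applying coadjoint equivariance of $J$ once more to the remaining $J\circ g^{-1}$ factor, I obtain
\begin{equation*}
\meth(H \circ \Ad^*_{g^{-1}}) \circ J = J \circ g^{-1} \circ \varphi(X_{H \circ J}) \circ g = \Ad^*_g \circ \meth(H) \circ \Ad^*_{g^{-1}} \circ J,
\end{equation*}
where the left-hand side uses the descent hypothesis applied to the pulled-back Hamiltonian. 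Since $J\colon M\to J(M)$ is surjective onto $J(M)$, the factor $J$ on the right may be cancelled, yielding the claimed equivariance of $\meth$ on $J(M)$.

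The main conceptual obstacle, rather than any calculation, is bookkeeping: coadjoint equivariance of $J$ converts a right action of $g$ into left multiplication by $\Ad^*_{g^{-1}}$, while $G$-equivariance of $\varphi$ converts right composition by $g$ into left composition by $g^{-1}$, and one must combine these two naturality properties in the correct order. The argument is essentially a diagram chase; granted the descent of $\varphi(X_{H\circ J})$ (which is an assumption of the theorem, not something to be proved here), the remaining steps are formal.
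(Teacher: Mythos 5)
Your proof is correct and follows essentially the same route as the paper's: rewrite $(H\circ\Ad^*_{g^{-1}})\circ J$ as $(H\circ J)\circ g$ via coadjoint equivariance of $J$, apply $G$-equivariance of the integrator to get the conjugation identity on $M$, then descend and cancel $J$ by surjectivity onto $J(M)$. No gaps; the bookkeeping of the two equivariance conventions is handled correctly.
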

\begin{proof}
$G$ acts as a Poisson map, so $X_{H \circ J \circ g} = T g^{-1} X_{H \circ J}\circ g$ \cite[Prop.~10.3.2]{ma-ra}.
Combining the equivariance of $\varphi$ and the equivariance of $J$ we obtain 
\begin{equation}
	\label{eq:phiJequi}
\varphi(X_{H \circ \Adg{g\inv} \circ J}) = g\inv \circ \varphi(X_{H\circ J}) \circ g
.
\end{equation}

Now, $\meth$ is defined by $\meth(H) \circ J = J \circ \varphi(X_{H\circ J})$.
This gives $\meth(H\circ\Adg{g\inv}) \circ J = J \circ \varphi(X_{H\circ\Adg{g\inv}\circ J})$.
Using (\ref{eq:phiJequi}) and the equivariance of $J$ we finally obtain $\meth(H\circ\Adg{g\inv}) \circ J = \Adg{g} \circ \meth(H) \circ \Adg{g\inv} \circ J$.
We conclude that the result must hold on $J(M)$.
\end{proof}

A consequence is that the reduced integrator will preserve any coadjoint symmetries of $H$.

\begin{definition} Let $G$ act on $M$. A function $I\colon M\to \R^k$ is an {\em invariant} of $G$ if $I(g\cdot x) = I(x)$ for all $x\in M$. The function $I$ is a {\em complete invariant} of the action of the orbits of $G$ are given by the fibres of $I$.
\end{definition}

\begin{theorem}
\label{thm:onegroupint}
Let $G$ be a Lie group with a Hamiltonian action on $M$ with complete invariant $I\colon M\to\R^k$ and whose momentum map $J\colon M\to \mathfrak{g}^*$ has connected fibres.
Let $\varphi\colon M\to M$ be a symplectic map.
\begin{enumerate}
\item[\rm (i)] If $I$ is a first integral of $\varphi$ then the reduced map of $\varphi$ on $\mathfrak{g}^*$ is (Lie--)Poisson and preserves the coadjoint orbits.
\item[\rm(ii)] If $M$ is a symplectic vector space and $I$ is quadratic then symplectic Runge--Kutta methods applied to $X_{H\circ J}$ are collective symplectic integrators for $(J(M), \{,\},H)$. 
\item[\rm(iii)] If $M$ is a symplectic vector space and there is a Darboux basis in which $I$ is bilinear, then symplectic partitioned Runge--Kutta methods in this basis are collective symplectic integrators for $(J(M), \{,\},H)$. 
\end{enumerate}
\end{theorem}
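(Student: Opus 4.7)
The plan is to deduce (ii) and (iii) from (i), which itself will be an almost immediate consequence of Theorem~\ref{thm:onegroup}.

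For (i), the key observation is that because $I$ is a complete invariant, its fibres are exactly the $G$-orbits. The hypothesis that $I$ is a first integral of $\varphi$ reads $I\circ\varphi=I$, so $\varphi$ preserves the fibres of $I$, i.e.\ it fixes each $G$-orbit setwise. This places us precisely in the setting of Theorem~\ref{thm:onegroup}: part~(i) of that theorem gives that the reduced map is Lie--Poisson, and part~(ii) gives that it preserves the coadjoint orbits.

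For (ii), the strategy is to reduce to (i) by checking that $I$ is automatically conserved by any symplectic Runge--Kutta method applied to $X_{H\circ J}$. First I would verify that $I$ is a first integral of the continuous flow: the formula $X_{H\circ J}(x)=\xi_{dH(J(x))}$ quoted just before the theorem shows that this vector field is pointwise an infinitesimal $G$-action, hence tangent to $G$-orbits; since $I$ is constant on orbits, $dI\cdot X_{H\circ J}=0$. Next I would invoke the classical result (see \cite{ha-lu-wa}) that symplectic Runge--Kutta methods conserve every quadratic first integral of the integrated vector field. This yields $I\circ\varphi=I$, so (i) delivers the Lie--Poisson, leaf-preserving reduced map. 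Consistency with $X_H$ is automatic, since $\varphi$ is a consistent integrator for $X_{H\circ J}$ and the latter descends to $X_H$ on $J(M)$.

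For (iii), the argument runs in parallel, replacing the quadratic-invariant preservation theorem by its partitioned analogue: a symplectic partitioned Runge--Kutta method expressed in the given Darboux coordinates conserves every invariant that is bilinear in the split variables $(q,p)$. The hypothesis on $I$ is precisely what activates this result, after which the same pipeline as in (ii) closes the argument. The step that needs the most care is really the tangency of $X_{H\circ J}$ to the $G$-orbits; once that is in hand, (ii) and (iii) are direct combinations of Theorem~\ref{thm:onegroup} with standard invariant-preservation theorems for symplectic (P)RK integrators, and the real conceptual work is concentrated in (i).
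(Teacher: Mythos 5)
Your proposal is correct and follows essentially the same route as the paper: reduce (ii) and (iii) to (i) via the quadratic/bilinear first-integral preservation theorems for (partitioned) symplectic Runge--Kutta methods, and deduce (i) from Theorem~\ref{thm:onegroup} by noting that preserving the complete invariant $I$ means fixing each $G$-orbit. Your explicit check that $I$ is a first integral of $X_{H\circ J}$ (via tangency of the collective vector field to the $G$-orbits) is a step the paper leaves implicit, but it is the same argument.
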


\begin{proof}\ \\[-5mm]
\begin{enumerate}
\item[(i)] By hypothesis, $\varphi$ preserves $I$, hence it fixes each group orbit as required by Theorem \ref{thm:onegroup}.
\item[(ii)] Symplectic Runge--Kutta methods preserve all quadratic first integrals of Hamiltonian systems \cite{ha-lu-wa}, hence under these hypotheses they fix each group orbit.
\item[(iii)] Partitioned symplectic Runge--Kutta methods preserve all bilinear first integrals of Hamiltonian systems \cite{ha-lu-wa}, hence under these hypotheses they fix each group orbit.
\end{enumerate}
\end{proof}

If the realization is full then these integrators provide collective symplectic integrators for $(\mathfrak{g}^*,\{,\},H)$.

The following example shows that Runge--Kutta methods cannot preserve group orbits in general. Let $a\in{\mathbb C}$ and let $G=\R^{>0}$ act on $M=\mathbb{C}$ by $t\cdot z = {\rm e}^{a t}z$; consider any vector field tangent to the orbits. When $a$ is imaginary, the orbits are the origin and the circles $|z|^2=$ const. and are fixed by symplectic Runge--Kutta methods. When $a$ is real, the orbits are the origin and the straight open rays meeting the origin. This action has no smooth invariants (the orbit closures intersect at 0, so any continuous invariant must be constant). Each line through the origin is invariant, and is invariant under all Runge--Kutta methods; each contains an invariant open ray, which is preserved by `positivity-preserving' Runge--Kutta methods. When $a$ is neither real nor imaginary, the orbits are spirals; there are no smooth invariants and the orbits are not fixed by any Runge--Kutta method. Thus, the proposed integration method can only cope with fairly simple actions. This example motivates the following extension of Theorem \ref{thm:onegroupint}.

A {\em polyhedral set} is the intersection of affine subspaces and closed half-subspaces. An example is the orthant $x_i\ge 0$ for all $i$. Sufficient conditions for a Runge--Kutta method to preserve this orthant for sufficiently small time steps when it is invariant are known \cite{horvath}; such methods are called {\em positivity-preserving}. The midpoint rule is positivity-preserving; positivity is preserved for time steps less than $2/L$ where $L$ is the Lipschitz constant of the vector field.

\begin{theorem}
\label{thm:polytope}
Let $G$ be a Lie group with a Hamiltonian action on the symplectic vector space $M$ whose momentum map $J\colon M\to \mathfrak{g}^*$ has connected fibres and which has quadratic (resp. bilinear) invariants  $I\colon M\to\R^k$ such that the closure of each orbit is the intersection of an invariant polyhedral set and a fibre of $I$. Then for sufficiently small time steps, positivity-preserving symplectic Runge--Kutta methods applied to $X_{H\circ J}$ are collective symplectic integrators for $(J(M), \{,\},H)$. 
\end{theorem}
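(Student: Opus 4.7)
The plan is to combine the two quadratic-invariant mechanisms already developed---preservation of $I$ by symplectic (partitioned) Runge--Kutta methods from Theorem~\ref{thm:onegroupint}, and preservation of invariant polyhedral sets by positivity-preserving methods---to obtain preservation of each orbit closure $\overline{G\cdot x} = P_x \cap I^{-1}(I(x))$, where $P_x$ is the invariant polyhedral set hypothesized to exist. A short continuity argument will then upgrade this to preservation of each orbit itself, after which Theorem~\ref{thm:onegroup} supplies the Poisson property and coadjoint-orbit preservation of the reduced map.

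Concretely, first I would invoke Theorem~\ref{thm:onegroupint}(ii) (resp.\ (iii)): under the quadratic (resp.\ bilinear) hypothesis the method preserves $I$ exactly, hence each fibre $I^{-1}(I(x))$. Second, since the Guillemin--Sternberg formula gives $X_{H\circ J}(y) = \xi_{dH(J(y))}$, the vector field $X_{H\circ J}$ is tangent to every $G$-orbit, hence by continuity to every orbit closure, and in particular tangent to (and so leaves invariant) the polyhedral set $P_x$. Applying Horv\'ath's criterion in the form quoted before the theorem, the iterate $\varphi(x)$ lies in $P_x$ whenever $\Delta t < 2/L$, with $L$ the (global) Lipschitz constant of $X_{H\circ J}$ on $M$. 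Combining these two inclusions yields $\varphi(x) \in P_x \cap I^{-1}(I(x)) = \overline{G\cdot x}$. The orbit-versus-closure upgrade is then done by consistency: $\varphi(x) \to x$ as $\Delta t \to 0$, and the orbit $G\cdot x$ is a relatively open subset of its closure in a neighbourhood of $x$ (the closure is stratified by orbits, with $G\cdot x$ being the top-dimensional stratum through $x$), so for $\Delta t$ sufficiently small $\varphi(x) \in G\cdot x$; lower-dimensional boundary orbits are treated by the same argument applied to their own, smaller, polyhedral strata (and in the extreme case of $G$-fixed points, $X_{H\circ J}$ vanishes there and the iterate is simply fixed). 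Theorems~\ref{thm:onegroup}(i) and (ii) then complete the proof, giving that the induced map $\tphi$ on $J(M)$ is Poisson and preserves coadjoint orbits.

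The main obstacle is the orbit-versus-closure step, which is the whole point of the theorem: the example preceding it shows that a Runge--Kutta method without the positivity property can move an iterate along a line through the origin and emerge on the wrong open ray, so something beyond $I$-preservation is genuinely needed. Making the continuity argument uniform---ensuring a single time-step bound suffices simultaneously for all initial conditions and compatible with the stratified polyhedral structure---is where the positivity hypothesis, together with the global Lipschitz bound that the symplectic-vector-space setting provides, earns its keep.
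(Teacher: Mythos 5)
Your skeleton matches the paper's up to the crucial step: preserve $I$ by the symplectic (partitioned) Runge--Kutta property, preserve the invariant polyhedral set by Horv\'ath's criterion, conclude that each orbit closure is invariant, and then hand off to Theorem~\ref{thm:onegroup}. The gap is in your upgrade from orbit closures to orbits. You argue by consistency: $\varphi_{\Delta t}(x)\to x$ as $\Delta t\to 0$ and $G\cdot x$ is relatively open in its closure near $x$, so for small $\Delta t$ the iterate stays in $G\cdot x$. This is a pointwise statement: the admissible step size depends on the distance from $x$ to the lower-dimensional boundary strata of $\overline{G\cdot x}$, and that distance can be arbitrarily small. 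For any fixed $\Delta t>0$ there are initial conditions too close to the boundary for the argument to apply, so no single time-step bound covers all of $M$ --- and the theorem needs exactly such a uniform bound to produce a well-defined collective integrator. You flag this yourself as ``where the positivity hypothesis earns its keep,'' but you do not actually close it; the continuity argument cannot be made uniform without additional input. (A secondary worry: relative openness of an orbit in its closure is not automatic for Lie group actions and would itself need to be extracted from the polyhedral hypothesis.)

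The paper closes this step by a purely set-theoretic argument that is uniform in the initial condition: the boundary of an orbit closure is, by the same hypothesis, again an intersection of an invariant polyhedral set and a fibre of $I$, hence is itself invariant under the method; since the method is invertible, the complement of the boundary inside the closure --- the relatively open interior --- is also invariant. That interior is either a single orbit (done) or a union of orbits, to which the argument is applied again, recursing down the strata. No limit $\Delta t\to 0$ is taken anywhere beyond the single uniform threshold required by Horv\'ath's criterion. If you replace your consistency step with this boundary-invariance-plus-invertibility recursion, your proof becomes the paper's.
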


\begin{proof}
Positivity-preserving Runge--Kutta methods preserve also preserve invariant polyhedral sets for sufficiently small time steps \cite{horvath}. 
 Quadratic invariants are preserved by symplectic Runge--Kutta methods. Therefore, the closure of each orbit is fixed by the given methods. The boundary of an orbit closure is itself, by hypothesis, the intersection of an invariant polyhedral set and a fibre of $I$ and hence is itself invariant. Since the map is invertible, the interior of the orbit closure is also invariant. This is either an orbit (in which case we are done) or a union of orbits (in which case the argument is repeated). Thus every orbit is fixed by the method and Theorem \ref{thm:onegroup} gives the result.
\end{proof}

If the orbit closures are just slightly more general, for example the intersection of a polyhedral set and a fibre of an invariant quadratic, then a positivity-preserving symplectic Runge--Kutta method need not fix them. (In $\R^3$, the circle $x_1=0$, $\|x\|^2=1$ in the flow of $\dot x = J(x)x$, $J^T=-J$, $J_{1k}=f_k(x_1,\|x\|^2-1)$, $f(0,0)=0$ is such a set.)

\subsection{The symmetry approach}

Often there is a second group acting on the fibres of $J$. If its orbits are large enough, and respected by the integrator, this can be enough to ensure that the integrator is collective.

\begin{theorem}
\label{thm:transitive}
Let $G_1$ be a group with a Hamiltonian action on $M$ and momentum map $J_1$.
Let $G_2$ be a group acting on $M$ that fixes each fibre of $J_1$ and is transitive on them. Then any $G_2$-equivariant symplectic map is collective.
\end{theorem}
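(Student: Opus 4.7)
The plan is to reduce the question to a tautology by identifying the fibres of $J_1$ with the orbits of $G_2$, and then to invoke $G_2$-equivariance of $\varphi$.

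First I would observe that the two hypotheses on $G_2$ together say precisely that each fibre $J_1^{-1}(\{p\})$ coincides with a single $G_2$-orbit. Indeed, ``$G_2$ fixes each fibre'' means the action leaves every fibre invariant setwise, so every $G_2$-orbit sits inside some fibre; and transitivity on each fibre means every fibre is exhausted by a single orbit. Hence
\begin{equation*}
\{\text{fibres of } J_1\} \;=\; \{G_2\text{-orbits in } M\}.
\end{equation*}

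Next, $G_2$-equivariance of $\varphi$ (i.e.\ $\varphi \circ g = g \circ \varphi$ for all $g\in G_2$) immediately gives $\varphi(G_2\cdot x) = G_2\cdot \varphi(x)$, so $\varphi$ sends $G_2$-orbits to $G_2$-orbits. Combining with the identification above, $\varphi$ sends fibres of $J_1$ to fibres of $J_1$, which by the remark following the definition of ``collective'' is exactly the statement that $\varphi$ descends to a map $\widetilde\varphi$ on $J_1(M)$.

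There is essentially no obstacle here; the only thing to be a little careful about is that ``fixes each fibre'' is read setwise rather than pointwise (otherwise transitivity would force each fibre to be a point and the statement would be vacuous). Note that symplecticity of $\varphi$ is not needed for collectivity itself, but it ensures, via the computation already used in the proof of \autoref{thm:onegroup}(i), that the reduced map $\widetilde\varphi$ is Poisson on $J_1(M)$ — which is what one actually wants in subsequent applications of the result.
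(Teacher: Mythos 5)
Your proof is correct and follows essentially the same route as the paper's: identify each fibre of $J_1$ with a single $G_2$-orbit via the two hypotheses, then use equivariance to send orbits to orbits and hence fibres to fibres. Your closing remarks (the setwise reading of ``fixes'' and the observation that symplecticity is not needed for descent itself) are accurate and consistent with the paper's argument.
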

\begin{proof}
By transitivity, any two points on the fibre $J_1^{-1}(z)$ may be written in the form $x$, $g_2\cdot x$ for some $g_2\in G_2$. Then $G_2$-equivariance gives $\varphi(g_2\cdot x) = g_2\cdot \varphi(x)$, that is, $\varphi(g_2\cdot x)$ lies on the $G_2$-orbit (= $J_1$-fibre) of $\varphi(x)$. Thus the $J_1$-fibres map to $J_1$-fibres and the map descends; from equivariance of $J_1$ it is collective.
\end{proof}

Consider $G=GL(1)$ acting by cotangent lifts on $T^*\R$ with momentum map $J=qp$. $G$ acts on the fibres of $J$, and is transitive on generic fibres, but is not transitive on $J^{-1}(0)$. However, we shall see its orbits are still large enough to construct collective integrators. This motivates the following definition.

\begin{definition}
A smooth function $\Psi\colon M\to P$ has {\em complete symmetry group $G$} if $G$ acts smoothly on $M$, has invariant $\Psi$,  and there are no non-constant $G$-invariant continuous functions on any fibre of $\Psi$.
\end{definition}

If $\Psi$ is a complete invariant for $G$, i.e., if the fibres of $\Psi$ are the orbits of $G$, then $G$ is a complete symmetry group for $\Psi$.
If the action of $G$ is transitive on a dense subset of each fibre of $\Psi$, then $G$ is a complete symmetry group for $\Psi$.
If the orbits of $G$ are closed (in particular if the action of $G$ is proper, which happens if $G$ is compact), then the property of being a complete symmetry group is equivalent to transitivity of the action of $G$ on the fibres.

\begin{lemma}
\label{lemma:big}
Let $\Psi\colon M\to P$ have complete symmetry group $G$.  Then any continuous map $\varphi\colon M\to M$ which maps orbits of $G$ to orbits of $G$ is $\Psi$-collective.
\end{lemma}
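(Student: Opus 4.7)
I unpack the conclusion: $\varphi$ is $\Psi$-collective iff $\Psi\circ\varphi$ is constant on each fibre $F := \Psi^{-1}(p)$. So the goal reduces to showing that the continuous map $\Psi\circ\varphi\colon M\to P$ is constant on every fibre of $\Psi$.

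\textbf{Step 1: reduce to $G$-invariance on each fibre.} Fix a fibre $F$ and a point $x\in F$. Since $G$ is a symmetry group for $\Psi$, the whole orbit $G\cdot x$ lies in $F$. The hypothesis on $\varphi$ says $\varphi(G\cdot x)\subseteq G\cdot\varphi(x)$; hence for every $g\in G$ there is an $h\in G$ with $\varphi(g\cdot x) = h\cdot\varphi(x)$. Applying the $G$-invariant function $\Psi$ gives
\[
(\Psi\circ\varphi)(g\cdot x) \;=\; \Psi(h\cdot\varphi(x)) \;=\; \Psi(\varphi(x)) \;=\; (\Psi\circ\varphi)(x).
\]
Thus $\Psi\circ\varphi$, restricted to $F$, is continuous and $G$-invariant.

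\textbf{Step 2: apply the complete-symmetry hypothesis.} The definition of complete symmetry group forbids non-constant continuous $G$-invariant real-valued functions on $F$. To use this against the $P$-valued map $\Psi\circ\varphi|_F$, I would argue by contradiction: if $\Psi\circ\varphi$ takes two distinct values $p_1\neq p_2$ on $F$, then since $P$ is a (Hausdorff) manifold one can find a continuous $f\colon P\to\R$ with $f(p_1)\neq f(p_2)$ (e.g.\ a bump function, or a local coordinate); the composition $f\circ\Psi\circ\varphi|_F$ is then a non-constant continuous $G$-invariant real-valued function on $F$, contradicting the hypothesis. Hence $\Psi\circ\varphi$ is constant on $F$, which is exactly collectivity.

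\textbf{Main obstacle.} The only subtlety is the passage from the scalar notion of $G$-invariant function in the definition to the $P$-valued composition $\Psi\circ\varphi$; this is handled routinely by separating points of $P$ with continuous real-valued functions. Everything else is a direct unwinding of the definitions of ``collective'', ``invariant'', and ``maps orbits to orbits''; note in particular that we do \emph{not} need $\varphi$ to map orbits \emph{onto} orbits, only into orbits, so the hypothesis is used in its weakest form.
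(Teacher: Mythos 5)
Your proof is correct, and its core mechanism is the same as the paper's: compose $\Psi\circ\varphi$ with continuous real-valued functions to manufacture continuous $G$-invariant functions on a fibre, then invoke the complete-symmetry hypothesis to force them to be constant. Where you genuinely differ is in how the conclusion is extracted. The paper forms the saturation $N=\Psi^{-1}(\Psi(\varphi(L)))$ of the image of one fibre $L$, shows that every continuous function on the quotient $N/\Psi$ is constant, and then concludes that this Hausdorff quotient must be a single point. You instead argue directly by contradiction on $L$ itself: if $\Psi\circ\varphi$ took two distinct values on $L$, a continuous function on the manifold $P$ separating those two points would pull back to a non-constant continuous $G$-invariant function on $L$. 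Your route is shorter and, as a bonus, more robust: the paper's final step (a Hausdorff space on which all continuous real-valued functions are constant reduces to a point) is not a valid fact for arbitrary Hausdorff spaces and needs the same point-separation input you use explicitly, whereas separating two points of a manifold by a continuous function is unproblematic. Your remark that only the \emph{into} version of ``maps orbits to orbits'' is needed also matches how the hypothesis is actually used in the paper's argument.
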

\begin{proof}
		Let $x\in \Psi(M)$ and consider one fibre $L := \Psi^{-1}(x)$.
		Let $N := \Psi^{-1}( \Psi(\varphi(L)))$ be the set of fibres of $\Psi$ in $\varphi(L)$, and quotient this space by 
			the fibres to obtain $N/ \Psi$.
		Consider a continuous function $g\colon N/ \Psi\to\R$.
		Pulling back $g$ by $ \Psi$ gives a continuous function $h$ on $N$ which is constant on the orbits of $G$.
		Therefore $h|_L$ is also constant on the orbits of $G$.
		Because $G$ is a complete symmetry group of $ \Psi$, $h|_L$ must be constant.
		The function $g$ must then be constant, because of the definition of $N$.
		Since $N/ \Psi$ is Hausdorff (because the fibres are closed subsets of $M$), and all its continuous functions 
			are constant, it must reduce to one point, that is, $N$ consists of exactly one fibre.
		We conclude that $\varphi$ maps fibres of $ \Psi$ to fibres of $\Psi$, that is, $\varphi$ is $ \Psi$-collective.
\end{proof}

For example, if $G_1$ has a Hamiltonian action on $M$ and momentum map $J_1$ with complete symmetry group $G_2$, Then any $G_2$-equivariant symplectic map is collective. Our main example of this situation is the following.

\begin{theorem}
\label{thm:twogroupsint}
Let $G_1$ be a group with a Hamiltonian action on $M$ and momentum map $J_1$ with complete symmetry group $G_2$ that has momentum map $J_2$. Let $H$ be a Hamiltonian on $\mathfrak{g}_1^*$.
\begin{enumerate}
\item[\rm(i)]
If the action of $G_2$ is linear  then  symplectic Runge--Kutta methods applied to $X_{H\circ J_1}$ are
collective symplectic integrators for $(J_1(M), \{,\},H)$.
\item[\rm(ii)]
If the action of $G_2$ is a linear cotangent lift then symplectic partitioned Runge--Kutta methods applied to $X_{H\circ J_1}$ are collective symplectic integrators for $(J_1(M), \{,\},H)$.
\end{enumerate}
\end{theorem}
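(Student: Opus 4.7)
The plan is to verify the three properties required of a collective symplectic integrator: that $\varphi$ descends through $J_1$ to a map $\widetilde\varphi$ on $J_1(M)\subset\mathfrak{g}_1^*$, that $\widetilde\varphi$ is Poisson, and that $\widetilde\varphi$ fixes each coadjoint orbit (symplectic leaf) meeting $J_1(M)$. Parts (i) and (ii) share the same template, differing only in whether one invokes the preservation of quadratic first integrals by symplectic Runge--Kutta methods or the preservation of bilinear first integrals by partitioned symplectic Runge--Kutta methods in a Darboux basis \cite{ha-lu-wa}. I will describe (i); part (ii) is the evident adaptation.

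For the descent, I would first observe that $G_2$ fixes every fibre of $J_1$, so $J_1$ and hence $H\circ J_1$ are $G_2$-invariant, which in turn makes $X_{H\circ J_1}$ a $G_2$-invariant vector field on $M$. Since Runge--Kutta methods are affine-equivariant and $G_2$ acts linearly, the resulting map $\varphi$ commutes with the $G_2$-action; in particular $\varphi$ sends every $G_2$-orbit to a $G_2$-orbit. \autoref{lemma:big}, applied with $\Psi = J_1$ and $G = G_2$, then shows $\varphi$ is $J_1$-collective and hence descends to a map $\widetilde\varphi$ on $J_1(M)$. The Poisson property of $\widetilde\varphi$ is immediate from the same chain-rule calculation used in the proof of Theorem \ref{thm:onegroup}(i), exploiting symplecticity of $\varphi$ together with the Poisson-map property of $J_1$.

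The step I expect to be the real obstacle is coadjoint-orbit preservation. The essential extra ingredient is $J_2$: because $G_2$ acts linearly and Hamiltonianly, the components of $J_2$ are quadratic functions on $M$, and because $H\circ J_1$ is $G_2$-invariant, Noether's theorem implies that $J_2$ is a first integral of $X_{H\circ J_1}$. A symplectic Runge--Kutta method therefore preserves $J_2$ exactly, so $\varphi$ maps each $J_2$-level set to itself. To feed this into Theorem \ref{thm:onegroup}(ii), which demands that $\varphi$ fix each $G_1$-orbit, one must exploit the bifoliation/dual-pair structure implicit in the setup, whereby the $G_1$-orbits in $M$ coincide with the fibres of $J_2$; granting this, $\varphi(x)\in G_1\cdot x$ for every $x$, and Theorem \ref{thm:onegroup}(ii) yields that $\widetilde\varphi$ preserves every coadjoint orbit. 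For part (ii) the same argument goes through with $J_2$ bilinear in the chosen Darboux basis and preserved by the partitioned symplectic Runge--Kutta method applied in that basis.
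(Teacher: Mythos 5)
Your descent argument is precisely the paper's proof: the authors observe that $H\circ J_1$ is constant on the fibres of $J_1$ and hence $G_2$-invariant, that the (partitioned) symplectic Runge--Kutta method preserves the linear (cotangent-lifted) symmetry $G_2$ of $X_{H\circ J_1}$, and then invoke Lemma~\ref{lemma:big}; that is the entirety of their argument. Supplying the Poisson property of the reduced map by the chain-rule computation from Theorem~\ref{thm:onegroup}(i) is the natural (unstated) completion, and your part~(ii) adaptation is likewise what is intended.

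Where you go further is the coadjoint-orbit step, and there your argument has a genuine gap: it hinges on the assertion that the $G_1$-orbits coincide with the fibres of $J_2$, which is not among the hypotheses. The theorem assumes only that $G_2$ is a complete symmetry group for $J_1$ possessing some momentum map $J_2$; nothing forces $J_2$ to be a complete invariant for $G_1$. Even in the dual-pair setting the paper records only the one-way containment (``the orbits of $G_1$ are contained in the fibres of $J_2$''), and the $GL(1)$--$GL(1)$ example with $J_1=J_2=qp$ shows a fibre $J_2^{-1}(0)$ that splits into several orbits. So the reduction to Theorem~\ref{thm:onegroup}(ii) as you set it up does not follow from the stated hypotheses. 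To be fair, the paper's own proof is silent on leaf preservation altogether --- Lemma~\ref{lemma:big} yields only that the integrator descends to a Poisson map --- and in the examples orbit preservation is obtained either from the direct approach (Theorem~\ref{thm:onegroupint}) or from the Casimirs being collective for $J_2$. The latter is the cleaner repair of your step: exact preservation of the quadratic (bilinear) $J_2$ forces preservation of every function of $J_2$, in particular of $C\circ J_1$ whenever a Casimir $C$ satisfies $C\circ J_1=\widehat C\circ J_2$, which pins the reduced map to the coadjoint orbits cut out by Casimirs without requiring $G_1$-orbits to equal $J_2$-fibres.
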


\begin{proof}
First note that $H\circ J_1$ is constant on fibres of $J_1$, hence $G_2$-invariant.
\\[-5mm]
\begin{enumerate}
\item
The linear symmetry $G_2$ of $X_{H\circ J_1}$ is preserved by symplectic Runge--Kutta methods.  Lemma \ref{lemma:big} now gives the result.
\item
The linear cotangent lift symmetry $G_2$ of $X_{H\circ J_1}$ is preserved by symplectic partitioned Runge--Kutta methods.  Lemma \ref{lemma:big} now gives the result.
\end{enumerate}
\end{proof}

Often there is a complete symmetry group $G_2$ that makes $(G_1,G_2)$ into a {\em dual pair}.

\begin{definition} \label{defn:dualpair}
A $C^k$ (resp. {\em analytic}) {\em dual pair} is a pair $G_1$, $G_2$ of Lie groups  with Hamiltonian actions on $M$ and momentum maps $J_1$, $J_2$ such that
the $C^k$ (resp. analytic) functions that commute with $J_1$-collective functions are collective for $J_2$ and vice versa. That is, the sets of
$J_1$-collective and $J_2$-collective functions are mutual centralizers in the Poisson algebra of all $C^k$ (resp. analytic) functions on $M$.
\end{definition}

For a dual pair, the orbits of $G_1$ are contained in the fibres of $J_2$ and the orbits of $G_2$ are contained in the fibres of $J_1$.
The Hamiltonian vector field $X_{F\circ J_1}$ is $G_2$-equivariant and the Hamiltonian vector field $X_{F\circ J_2}$ is $G_1$-invariant. 

Note that if $f$ is constant on $G_2$-orbits, then $\langle {\rm d}f,X_{J_2}\rangle = 0 = \{f,J_2\}$, so the dual pair condition ensures that $f$ is $J_1$-collective. That is, $G_2$ is a complete symmetry group for $J_1$, so $G_2$-equivariant symplectic integrators are collective.

\begin{theorem}
\label{thm:dualpair}
Let $G_1$, $G_2$ be a $C^k$ or analytic dual pair acting on $M=\R^n$ and let $H\colon M\to \R$ be a smooth Hamiltonian. If the action of $G_2$ is linear (cotangent lift) then (partitioned) symplectic Runge--Kutta methods produce collective symplectic integrators that descend to $C^{k-1}$ or analytic symplectic integrators on $(J_1(M), \{,\},H)$.
\end{theorem}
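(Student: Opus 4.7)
My plan is to reduce the statement to Theorem \ref{thm:twogroupsint} by using the observation made in the paragraph preceding the theorem: the dual-pair hypothesis makes $G_2$ a complete symmetry group for $J_1$ on the appropriate class of test functions. Once this reduction is clean, parts (i) and (ii) of Theorem \ref{thm:twogroupsint} handle the linear and cotangent-lift cases directly, and it only remains to audit the differentiability of the descended map.

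To carry out the reduction, I would argue that Lemma \ref{lemma:big} applies. The text already records the key computation: a $C^k$ (resp.\ analytic) $G_2$-invariant function $f$ satisfies $\{f,J_2\}=\langle df, X_{J_2}\rangle=0$, so by the dual-pair centralizer property $f$ is $J_1$-collective. Lemma \ref{lemma:big} as stated uses continuous test functions, but its proof only needs that on the Hausdorff quotient a separating family of functions of the working regularity class exists, and $C^k$ (resp.\ analytic) functions on open subsets of $\R^n$ do separate points. Hence the conclusion of the lemma holds here: any continuous map on $M$ that preserves $G_2$-orbits maps $J_1$-fibres to $J_1$-fibres.

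With this in hand, Theorem \ref{thm:twogroupsint} applies unchanged. When $G_2$ acts linearly, the components of $J_2$ are quadratic invariants of $H\circ J_1$ preserved by symplectic Runge--Kutta steps; when $G_2$ acts by a linear cotangent lift, the components of $J_2$ are bilinear in the Darboux coordinates and are preserved by partitioned symplectic Runge--Kutta steps. In either case the one-step map $\varphi$ on $M$ is $G_2$-equivariant, hence $J_1$-collective by the strengthened lemma, so it descends to a Poisson map $\tphi$ on $J_1(M)$ preserving coadjoint orbits, by Theorem \ref{thm:onegroup}.

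The final and most delicate task is tracking regularity. The map $\varphi$ is smooth (resp.\ analytic) by the implicit function theorem applied to the (partitioned) Runge--Kutta defining equations with smooth (resp.\ analytic) data. The reduced map obeys $\tphi\circ J_1=J_1\circ\varphi$; on the open set where $J_1$ is a submersion one may write $\tphi=J_1\circ\varphi\circ\sigma$ for a local section $\sigma$ of $J_1$. In the $C^k$ case the smooth structure on $J_1(M)$ compatible with the dual-pair centralizer property is only of class $C^k$, so $\sigma$ is $C^k$ and one derivative is lost in the composition, giving a $C^{k-1}$ integrator. In the analytic case $\sigma$ is analytic and no loss occurs. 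The regularity bookkeeping is where I expect the main obstacle; the remainder is a citation of the earlier theorems.
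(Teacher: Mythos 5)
Your reduction to Lemma \ref{lemma:big} and Theorem \ref{thm:twogroupsint} does establish, after the repair you sketch, that the Runge--Kutta map preserves the components of $J_2$ and hence descends to a Poisson map on $J_1(M)$; up to that point you are consistent with the remarks the paper makes just before the theorem. But two parts of the conclusion are not reached. First, a symplectic integrator on $(J_1(M),\{,\},H)$ must fix each coadjoint orbit, and Theorem \ref{thm:onegroup}(ii) delivers this only if $\varphi$ fixes each $G_1$-orbit; you have shown only that $\varphi$ fixes each fibre of $J_2$, and for a general dual pair the $G_1$-orbits are contained in, but need not equal (nor be dense in), those fibres. Second, the regularity bookkeeping does not work as you describe it: $J_1$ is polynomial, so wherever it is a submersion its local sections are analytic and $J_1\circ\varphi\circ\sigma$ is exactly as smooth as $\varphi$ --- no derivative is lost there, and the claim that the smooth structure on $J_1(M)$ is ``only of class $C^k$'' has no content. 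The genuine loss of regularity occurs at the singular points of $J_1$ (compare the $(GL(1),GL(1))$ example with $J_1=J_2=qp$ at the end of the section), and local sections give no control there.

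The paper's proof supplies one idea that resolves both points at once and that your argument lacks: differentiate with respect to the step size to represent the integrator as the time-$h$ flow of a nonautonomous Hamiltonian $\widetilde H(x,t)$ on $M$. Since this flow preserves $J_2$, one has $\{\widetilde H,J_2\}=0$, and the dual-pair centralizer condition can now be applied to the globally defined $C^k$ (or analytic) function $\widetilde H$ itself, yielding $\widetilde H=\widehat H\circ J_1$ with $\widehat H$ of class $C^k$ (or analytic). The reduced integrator is then the flow of $\widehat H$ on the Lie--Poisson manifold: it preserves the symplectic leaves automatically, because Hamiltonian flows do, and it is $C^{k-1}$ (or analytic) because forming $X_{\widehat H}$ costs one derivative of $\widehat H$. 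Your route never produces a function to which the dual-pair condition can be applied, which is why the regularity claim --- the part of the theorem that genuinely uses the dual-pair hypothesis rather than only the complete-symmetry-group property --- remains out of reach.
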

\begin{proof}
The integrator preserves $J_2$. Differentiation with respect to the time-step $h$ represents the integrator as
the time-$h$ flow of a nonautonomous Hamiltonian vector field on $M$; let $\widetilde H(x,t)$ be the Hamiltonian.
Because its flow preserves $J_2$, $\{\widetilde H, J_2\}=0$. The dual pair condition implies that $\widetilde H = \widehat H \circ J_1$, where $\widehat H$ is $C^k$ or analytic; thus its flow, the reduced integrator, is $C^{k-1}$ or analytic.
\end{proof}

Definition \ref{defn:dualpair} is an instance of Weinstein's definition \cite{weinstein} of dual pair, which involves two Poisson maps $\psi_i$ from $M$ to Poisson manifolds $P_i$. There are many other versions and refinements of the concept of dual pair; see \cite{ka-le,le-mo-sj,or-ra}. One that is of use in constructing integrators for classical Lie--Poisson manifolds is the original representation-theoretic dual pair of Howe \cite{howe},  two subgroups $G_1$, $G_2$ of $Sp(M)$ such that $G_1$ (resp. $G_2$) is the centralizer of $G_2$ (resp. $G_1$). When the $G_i$ are reductive (i.e., when every $G_i$-invariant subspace of $M$ has a $G_i$-invariant complement, $i=1,2$),  the quadratic functions $J_2$ form a generating set for the $G_1$-invariant polynomials. If $H$ is any polynomial that satisfies $\{H,J_1\}=0$, then $H = \widehat H(J_2)$ where $\widehat H$ is a polynomial. That is, one has a `First Fundamental Theorem'---a set of polynomials that generate all polynomial invariants---for reductive Howe dual pairs \cite{howe,ka-le}. Consequently, these $(G_1,G_2)$ form analytic dual pairs:

\begin{theorem}
\label{thm:analytic}
Let $(G_1,G_2)$ be subgroups of $Sp(M)$ such that $J_1$ (resp. $J_2$) forms a generating set for the $G_2$ (resp. $G_1$)-invariant polynomials on $M$. Then $(G_1,G_2)$ forms an analytic dual pair.
\end{theorem}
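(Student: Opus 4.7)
The plan is to exploit the symmetric roles of $(G_1,G_2)$ so that it suffices to show a single direction: any real-analytic function $H$ on $M$ that Poisson-commutes with every $J_1$-collective analytic function is itself $J_2$-collective. The overall strategy will be to reduce the analytic statement to the polynomial version that was already noted in the paragraph preceding the theorem.

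First I would convert the commutativity hypothesis into $G_1$-invariance. For each $\xi\in\mathfrak{g}_1$ the linear function $\mu\mapsto\langle\mu,\xi\rangle$ is analytic on $\mathfrak{g}_1^*$, so the hypothesis yields $\{H,\langle J_1,\xi\rangle\}=0$. Since the Hamiltonian vector field of $\langle J_1,\xi\rangle$ is the infinitesimal generator $\xi_M$ of the $G_1$-action, this gives $\xi_M(H)=0$ for every $\xi\in\mathfrak{g}_1$, hence $H$ is invariant under the identity component of $G_1$.

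Next, because $G_1\subset Sp(M)$ acts linearly on $M$, expanding $H$ as a convergent Taylor series at the origin $H=\sum_{k\ge 0} H_k$ produces homogeneous components $H_k$ that are each $G_1$-invariant polynomials. By the hypothesis that the components of $J_2$ generate the algebra of $G_1$-invariant polynomials on $M$, each $H_k$ equals a polynomial $P_k(J_2^1,\ldots,J_2^m)$. It then remains to assemble the $P_k$ into a single real-analytic function $\widehat H$ on $J_2(M)\subset \mathfrak{g}_2^*$ with $H=\widehat H\circ J_2$.

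The main obstacle is this final analytic assembly. Two issues must be addressed: the choice of $P_k$ is in general ambiguous because of syzygies among the quadratic components of $J_2$, and the formal series $\sum_k P_k$ must be shown to converge to an actual analytic function on $J_2(M)$. Both issues are resolved by the reductive invariant-theoretic framework invoked in the discussion immediately preceding the statement (Howe dual pairs with reductive constituents): for a reductive linear action with polynomial invariants generated by $p_1,\ldots,p_m$, every real-analytic invariant descends to a real-analytic function of $p_1,\ldots,p_m$ on the image of the generating map. Applying this with $p_i=J_2^i$ produces the required analytic $\widehat H$ on $J_2(M)$, which is all that \autoref{defn:dualpair} requires. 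Finally, interchanging the roles of $G_1$ and $G_2$ in the symmetric hypothesis yields the reverse inclusion, so $(G_1,G_2)$ is an analytic dual pair.
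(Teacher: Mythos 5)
Your setup tracks the paper's for the first two steps: expand $H=\sum_k H_k$ into homogeneous components and apply the polynomial First Fundamental Theorem to each one to get $H_k=\widehat H_k\circ J_2$. (The paper gets the invariance of each $H_k$ slightly differently and more safely: since $J_1$ is quadratic, $\{H_k,J_1\}$ is homogeneous of degree $k$, so $\{H,J_1\}=0$ forces $\{H_k,J_1\}=0$ term by term, and the polynomial statement is used in exactly that form. Your detour through group invariance only yields invariance under the identity component of $G_1$, which for a disconnected group such as $O(n)$ is a strictly weaker property than being a ``$G_1$-invariant polynomial,'' the class to which the stated hypothesis applies.) The genuine gap is in your final step. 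You correctly identify the assembly of the $\widehat H_k$ into one analytic $\widehat H$ as the main obstacle, but you then dispose of it by invoking a ``reductive invariant-theoretic framework'' asserting that every real-analytic invariant is a real-analytic function of the polynomial generators. That assertion is, for practical purposes, the theorem you are being asked to prove, so the appeal is circular here; moreover it is not licensed by the hypotheses, which assume only the polynomial generating property and not reductivity (reductivity appears in the preceding paragraph only as a sufficient condition for that polynomial property), and no analytic analogue of the FFT is cited anywhere in the paper.

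The paper closes this gap with a short direct convergence argument that your write-up needs: since $H_k(x)=\widehat H_k(J_2(x))$ and $\sum_k H_k(x)$ converges for all $x$ in a neighbourhood of $0$ in $M$, the series $\sum_k \widehat H_k$ converges at every point of the form $J_2(x)$, hence on the image under $J_2$ of that neighbourhood, and one sets $\widehat H:=\sum_k\widehat H_k$ there to obtain $H=\widehat H\circ J_2$ with $\widehat H$ analytic. This also dissolves your worry about syzygies: any choice of $\widehat H_k$ with $H_k=\widehat H_k\circ J_2$ will do, because the values $\widehat H_k(J_2(x))=H_k(x)$ on the image of $J_2$ are independent of that choice and convergence is only needed there. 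Replacing your appeal to the external framework by this argument turns your proposal into essentially the paper's proof.
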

\begin{proof}
Let $H$ be real analytic at 0 and let $\{H,J_1\}=0$ in a neighbourhood of 0. Expand $H=\sum_k H_k$ in a Taylor series where $H_k$ is homogeneous of degree $k$. Then $\{H_k,J_1\}$ is homogeneous of degree $k$ and thus $\{H_k,J_1\}=0$. By assumption, there are polynomials $\widehat H_k\colon\mathfrak{g}_2^*\to\R$ such that $H_k = \widehat H_k\circ J_2$. Because $H$ is analytic at 0, $\sum H_k(x) = \sum (\widehat H_k\circ J_2)(x)$ is convergent for $x$ in some neighbourhood of 0 in $M$, hence $\widehat H := \sum \widehat H_k$ is convergent at $J_2(x)$ and hence convergent in some neighbourhood of the origin in $\mathfrak{g}_2^*$. That is, $H = \widehat H\circ J_2$ where $\widehat H$ is analytic.
\end{proof}

Note that such $(G_1,G_2)$ need not form $C^\infty$ dual pairs. Consider $(GL(1),GL(1))$ with $J_1=J_2=qp$. Then ${\rm e}^{-(qp)^{-2}}{\rm sign}(q)$ commutes with $J_1$ but is not a function of $J_2$.

\section{Collective integrators from bifoliations}

We have seen that we can construct symplectic integrators on Poisson manifolds from standard symplectic integrators in some cases from a Hamiltonian group action (Theorems \ref{thm:onegroup}, \ref{thm:onegroupint}, \ref{thm:polytope}) and from two commuting Hamiltonian group actions (Theorem \ref{thm:dualpair}). Theorem \ref{thm:dualpair} is a special case of Theorem \ref{thm:onegroup}, but it gives a little more, namely, it specifies the invariants that need to be preserved, it provides Poisson integrators for two spaces (corresponding to the two groups), and it relates the construction to the standard examples of Howe dual pairs. One can ask if it is possible, instead, to {\em generalize} Theorem \ref{thm:onegroup}, being prepared, of course, to get a little less in return. The key structures are the foliations defined by the orbits and by the fibres of the momentum map (in Theorem \ref{thm:onegroup}) or the fibres of each momentum map (in Theorem \ref{thm:dualpair}). These generalize to a {\em bifoliation}, a structure discussed in \cite{fasso,li-ma,vaisman} whose properties we summarize briefly here.

 Let $\F$ be a foliation of a symplectic manifold $(M,\omega)$.  The polar of $\F$, if it exists, is the unique 
foliation $\F^\perp$ of $M$ such that the tangent spaces of its leaves 
are the symplectic orthogonals of the tangent spaces of the leaves of 
$\F$. A foliation which has a polar is called symplectically complete, and $(\F,\F^\perp)$ is called a bifolation of $M$.
A foliation is symplectically complete iff the Poisson bracket of any two functions that are constant 
on leaves is again constant on leaves. In the case that $M/\F$ is a manifold, $\F$ is symplectically
complete iff there is a Poisson structure on $M/\F$ such that the projection map $\psi\colon M\to M/\F$ is Poisson; such a structure
is unique \cite{fasso}. 

\begin{theorem}
\label{thm:nogroup}
Let $(\F,\F^\perp)$ be a bifoliation of the symplectic manifold $(M,\omega)$ such that $M/\F$ is a manifold. Let $\psi\colon M\to M/\F$ be the projection and let $H\colon M/\F\to\R$. Then $X_{H\circ\psi}$ fixes each leaf of $\F^\perp$ and any symplectic integrator for $H\circ \psi$ that fixes each leaf of $\F^\perp$ descends to a symplectic integrator of $X_H$ in the Poisson manifold $M/\F$. In particular, when $M$ is a symplectic vector space and $\F^\perp$ is the set of fibres of a set of quadratic functions, then symplectic Runge--Kutta methods applied to $X_{H\circ\psi}$ descend to symplectic integrators of $X_H$.
\end{theorem}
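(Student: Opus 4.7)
The plan is to establish the three claims in sequence, with the last (special) case reduced to the second. First I would verify that $X_{H\circ\psi}$ is tangent to every leaf of $\F^\perp$. Since $\psi$ is constant on each leaf of $\F$, so is $H\circ\psi$, hence $d(H\circ\psi)$ annihilates $T\F$. From $i_{X_{H\circ\psi}}\omega = d(H\circ\psi)$ we read off $\omega(X_{H\circ\psi},v)=0$ for every $v\in T\F$, i.e.\ $X_{H\circ\psi}\in (T\F)^\perp = T\F^\perp$. Consequently its flow preserves each leaf of $\F^\perp$.

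Next, let $\varphi\colon M\to M$ be a symplectic map that fixes each leaf of $\F^\perp$. I would first show that $\varphi$ preserves the foliation $\F$, using the symplectic linear algebra identity $(T\F^\perp)^\perp = T\F$. Because $\varphi$ maps each $\F^\perp$-leaf to itself and is a diffeomorphism, $T_x\varphi$ restricts to a linear isomorphism from $T_x\F^\perp$ onto $T_{\varphi(x)}\F^\perp$. For any $v\in T_x\F$ and any $u'\in T_{\varphi(x)}\F^\perp$, write $u'=T_x\varphi\cdot u$ with $u\in T_x\F^\perp$; then $\omega(T_x\varphi\cdot v,u')=\omega(v,u)=0$ by symplecticity of $\varphi$ and the definition of $\F^\perp$. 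Thus $T_x\varphi\cdot v\in (T_{\varphi(x)}\F^\perp)^\perp = T_{\varphi(x)}\F$, so $T\varphi$ preserves the $\F$-distribution. Since the leaves of $\F$ are the maximal connected integral manifolds of this distribution, $\varphi$ sends $\F$-leaves to $\F$-leaves and therefore descends to a map $\widetilde\varphi\colon M/\F\to M/\F$ satisfying $\widetilde\varphi\circ\psi = \psi\circ\varphi$. Essentially the same pullback calculation as in the proof of Theorem \ref{thm:onegroup}(i)---using that $\psi$ is a Poisson surjection onto $M/\F$---then shows that $\widetilde\varphi$ is Poisson. That $\widetilde\varphi_{\Delta t}$ really is a (consistent) integrator for $X_H$ follows because $X_{H\circ\psi}$ is $\psi$-related to $X_H$, so differentiating $\widetilde\varphi_{\Delta t}\circ\psi = \psi\circ\varphi_{\Delta t}$ at $\Delta t=0$ gives the correct vector field on $M/\F$.

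For the specialization, assume $M$ is a symplectic vector space and that each $\F^\perp$-leaf is a common level set of quadratic functions $I_1,\ldots,I_k$. By the first step, $X_{H\circ\psi}$ is tangent to these level sets, so each $I_j$ is a first integral of $X_{H\circ\psi}$. Symplectic Runge--Kutta methods preserve all quadratic first integrals of a Hamiltonian system \cite{ha-lu-wa}, so they preserve each common level set of $I_1,\ldots,I_k$, i.e.\ they fix each leaf of $\F^\perp$. The previous step then delivers the conclusion.

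The main obstacle I anticipate is the descent step for a general $\varphi$: getting from ``$\varphi$ fixes each $\F^\perp$-leaf'' all the way to ``$\varphi$ preserves the foliation $\F$'' requires both the symplectic double-orthogonal identity and the (standard but non-trivial) fact that a diffeomorphism preserving an involutive tangent distribution must permute its maximal integral manifolds. The Poisson-ness of $\widetilde\varphi$ and the consistency of the reduced one-parameter family are then formal consequences, and the quadratic specialization is an immediate application of the preservation-of-quadratic-integrals property of symplectic Runge--Kutta methods.
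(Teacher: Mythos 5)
Your proposal correctly handles three of the four ingredients: the tangency of $X_{H\circ\psi}$ to the leaves of $\F^\perp$, the descent of a symplectic map that preserves the $\F^\perp$-foliation to a Poisson map on $M/\F$ (via $(T\F^\perp)^\perp = T\F$ and the pullback computation from Theorem \ref{thm:onegroup}(i)), and the quadratic specialization via preservation of quadratic first integrals by symplectic Runge--Kutta methods. These match the paper's (much terser) proof.

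However, there is a genuine gap. In this paper a ``symplectic integrator'' on a Poisson manifold is required not only to consist of Poisson maps consistent with $X_H$ but also to \emph{fix each symplectic leaf} of $M/\F$. A Poisson map automatically permutes symplectic leaves, but nothing in your argument forces the reduced map $\widetilde\varphi$ to fix each one --- and indeed your proof only ever uses that $\varphi$ maps $\F^\perp$-leaves to $\F^\perp$-leaves, never the stronger hypothesis that it fixes each such leaf, which is a sign that the strongest part of the conclusion has been dropped. The paper explicitly calls this ``the crucial step'' and supplies it with Proposition 14.21 of \cite{li-ma}: the image under $\psi$ of a leaf of $\F^\perp$ is contained in a single symplectic leaf of $M/\F$. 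Granting that, if $\varphi$ fixes the $\F^\perp$-leaf $L$ through $x$, then $\psi(\varphi(x))\in\psi(L)$ lies in the same symplectic leaf as $\psi(x)$, so $\widetilde\varphi$ fixes every symplectic leaf. Some such input relating the $\F^\perp$-leaves to the symplectic leaves of the quotient is indispensable and is missing from your write-up.
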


\begin{proof}
Any symplectic map that preserves $\F$ (resp. $\F^\perp$) necessarily preserves $\F^\perp$ (resp. $\F$) and hence descends to a Poisson map on $M/\F$ and on $M/\F^\perp$. The crucial step is to ensure that the reduced map on $M/\F$ fixes each coadjoint orbit. This follows from Proposition 14.21 of \cite{li-ma} which states that the image under $\psi$ of a leaf of $\F^\perp$ is contained in a symplectic leaf of $M/\F$. Thus, any symplectic map that fixes each leaf of $\F^\perp$ must fix each coadjoint orbit in $M/\F$.
\end{proof}

Theorem \ref{thm:nogroup} is the most general case; however, it gives somewhat less than the previous Theorems 
because the Poisson manifold is constructed in a somewhat convoluted way from the quadratic functions and the
projection $\psi$ is defined abstractly. In practise one will also need the leaves of $\F$ to be the fibres of functions
that can be used to define $\psi$.


There is a special case of Theorem \ref{thm:nogroup} which is particularly nice and which produces minimum-dimensional realizations and minimum-dimensional collective symplectic integrators. It uses the following construction of
Nekhoroshev \cite{nekhoroshev}; see also \cite{fasso}. Let $f_1,\dots,f_k$ be $k$ commuting functions on $(M,\omega)$. Then their fibres are coisotropic and symplectically complete. Their polar is isotropic and their projection to the quotient of $M$ by the polar lie in its symplectic leaves.
In particular, if $(f_1,\dots,f_{2n-k}):M\to\R^{2n-k}$ is a submersion and
$$ \{f_i,f_j\} = 0,\quad i=1,\dots,k,\quad j=1,\dots,2n-k$$
then the $f_i$ define a bifoliation with $\F$ the fibres of $f_1,\dots,f_{2n-k}$ isotropic; $\F^\perp$ the fibres of $f_1,\dots,f_k$ coisotropic; and $f_1,\dots,f_k$ the lifts of the Casimirs of the Poisson manifold $M/\F$, whose symplectic leaves have dimension
$2n-2k$. Moreover, we may take $f_1,\dots,f_{2n-k}$ as local coordinates on $M/\F$.
In our application, $f_1,\dots,f_k$ should be quadratic. Moreover, we can even drop the `collective' and consider any Hamiltonian with first integrals $f_i$:

\begin{theorem}
\label{thm:isotropic}
Let $M$ be a symplectic vector space and let $f_1,\dots,f_k$ be $k$ commuting quadratic functions on $M$. Then their Hamiltonian vector fields $X_{f_i}$ are integrable and generate an abelian group action (with momentum map $f_1,\dots,f_k$) whose orbits form the isotropic foliation $\F$ polar to the fibres of $f_1,\dots,f_k$. Symplectic Runge--Kutta methods applied to Hamiltonians with first integrals $f_1,\dots,f_k$ (i.e., such that $\{H,f_i\}=0$ for $i=1,\dots,k$) descend to symplectic integrators on the Poisson manifold $M/\F$. If $\F$ is the fibres of the functions $f=(f_1,\dots,f_{2n-k})$ then symplectic Runge--Kutta methods applied
to $H\circ f$ form collective symplectic integrators.
\end{theorem}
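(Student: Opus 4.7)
The plan is to chain three ingredients: linearity of Hamiltonian flows of quadratic functions on a symplectic vector space, the classical preservation of quadratic first integrals by symplectic Runge--Kutta methods, and the bifoliation reduction of Theorem~\ref{thm:nogroup}.

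First I would establish the structural claim. Because each $f_i$ is quadratic, $X_{f_i}$ is a linear vector field on $M$, hence complete, with flow a matrix exponential in $Sp(M)$. The vanishing brackets give $[X_{f_i},X_{f_j}]=X_{\{f_i,f_j\}}=0$, so the flows commute and combine to a smooth symplectic $\R^k$-action whose momentum map is, up to additive constants, $(f_1,\dots,f_k)$. At each $x$, the tangent space to the orbit is $\mathrm{span}(X_{f_1}(x),\dots,X_{f_k}(x))$; its symplectic orthogonal is $\bigcap_i \ker df_i(x)$, which is exactly the tangent to the fibre of $(f_1,\dots,f_k)$ through $x$. Thus the orbits foliate $M$ as the polar $\F$ of $\F^\perp$, and isotropy of $\F$ follows from $\omega(X_{f_i},X_{f_j})=\{f_i,f_j\}=0$.

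Next, for the reduction claim, let $H$ satisfy $\{H,f_i\}=0$ for each $i$, so each $f_i$ is a quadratic first integral of $X_H$. Symplectic Runge--Kutta methods preserve every quadratic first integral exactly \cite{ha-lu-wa}, so the one-step map fixes each fibre of $(f_1,\dots,f_k)$, i.e., each leaf of $\F^\perp$. Theorem~\ref{thm:nogroup} then yields a symplectic integrator of $X_H$ on the Poisson manifold $M/\F$. For the final collective statement, the Nekhoroshev hypothesis $\{f_j,f_i\}=0$ for $j=1,\dots,2n-k$ and $i=1,\dots,k$ gives $\{H\circ f,f_i\}=\sum_j(\partial_j H)(f)\,\{f_j,f_i\}=0$, so $H\circ f$ is of the type just treated; since $\F$ is by hypothesis the fibres of $f$, the reduced map is the claimed collective symplectic integrator.

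The main obstacle is conceptual rather than computational: verifying that the polar foliation produced by taking symplectic orthogonals of the fibres of $(f_1,\dots,f_k)$ coincides with the orbit foliation of the $\R^k$-action, and that Theorem~\ref{thm:nogroup} can be legitimately invoked with $\F^\perp$ equal to these fibres. Beyond this, the argument is a direct translation of the preservation of quadratic first integrals into fibre-preservation, plus one line of chain rule for the collective Hamiltonian.
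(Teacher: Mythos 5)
Your proof is correct and follows exactly the route the paper intends: the paper states Theorem \ref{thm:isotropic} without an explicit proof, treating it as a direct consequence of the Nekhoroshev bifoliation construction described immediately before it, Theorem \ref{thm:nogroup}, and the preservation of quadratic first integrals by symplectic Runge--Kutta methods --- which is precisely what you assemble. The only step worth making explicit is that $\{H,f_i\}=0$ forces $H$ to be constant on the leaves of $\F$ (the orbits of the abelian action), so that $H$ is collective for the projection $\psi\colon M\to M/\F$ and Theorem \ref{thm:nogroup} applies verbatim.
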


\section{Examples}

The examples are arranged in order of increasing dimension, starting with the 2-dimensional nonabelian Lie algebra. Recall that all cotangent lifted actions, and all linear symplectic actions, are Hamiltonian \cite{ma-ra}. We use momentum maps that are Poisson for the ``+'' Lie--Poisson bracket; the ``$-$'' bracket can be obtained by changing the sign of the momentum map.

\begin{example}\rm{\bf (a)}
\label{ex1}
Let $G=A(1)$, the group of  affine transformations of $\R$, the smallest nonabelian Lie group. Let $G$ act on $M=T^*\R$ by cotangent lifts, i.e., $(a,b)\cdot(q,p) = (aq+b,p/a)$, $a$, $b\in\R$. Then $J(q,p)=(qp,p)$. Let $(w_1,w_2)$ be coordinates on $\mathfrak{g}^*$. There is a 2-dimensional coadjoint orbit $\{(w_1,w_2)\colon w_2\ne 0\}$ and many 0-dimensional coadjoint orbits, $\{(w_1,0)\}$ for each $w_1$.  The fibres of $J$ consist of the $q$-axis and the points off the $q$-axis; they are connected. $J(M)$ is not quite all of $\g^*$: it consists of the origin together with the two half-planes. The vector field $X_{H\circ J}$ is
$$\eqalign{
 \dot q &= q \frac{\partial H}{\partial w_1}(qp,p) + \frac{\partial H}{\partial w_2}(qp,p)\cr
 \dot p &= -p\frac{\partial H}{\partial w_1}(qp,p)}$$
which is the generator of the group action corresponding to $\nabla H$.
Any partitioned symplectic Runge--Kutta method that fixes the group orbits (the $q$-axis and its complement) will, from
Theorem \ref{thm:onegroup}(ii), provide a collective symplectic integrator for $J(M)$. The orbit closures are the polyhedral sets $\{(q,p)\colon p\ge 0\}$, $\{(q,p)\colon p\le 0\}$, and $\{(q,0)\}$, and thus from Theorem \ref{thm:polytope}, the midpoint rule fixes the group orbits for sufficiently small time steps.
\\ \rm{\bf (b)}
An  action of $G$ for which $J(M)=\g^*$ can be constructed by prolonging the above action to act diagonally on $T^*\R^2$. The momentum map $J(q,p) = (p\cdot q, p_1+p_2)$ is surjective and has connected fibres. There are two bilinear invariants, $I_1:=(q_2-q_1)p_1$ and $I_2:=(q_2-q_1)p_2$, which classify generic orbits, and which are preserved by partitioned symplectic Runge--Kutta methods. However, the fibre with $I_1=I_2=0$ is 3-dimensional and contains several orbits, 
$$\eqalign{
& \{(q_1,q_1,p_1, p_2)\colon q_1\in\R,\  p_2/p_1= {\rm const.}\},\cr
& \{(q_1, q_2, 0,0)\colon q_1\ne q_2\},\cr
& \{(q_1, q_1,0,0)\colon q_1\in\R\}.
}$$
Their closures are polyhedral sets, so from Theorem \ref{thm:polytope} the midpoint rue  is a collective symplectic integrators for $(\mathfrak{g}^*,\{,\},H)$.
\end{example}

\begin{example}{\bf (a)}\rm
\label{ex2}
Let $G_1=O(3)$ with its natural cotangent lifted action on $M=T^*\R^3$. Its momentum map $J_1 = q\times p$ is surjective. The fibre of $J_1$ through $(q,p)$ is $\{(a q + b p, c q + d p)\colon a d - b c = 1\}$, which is connected.
The invariants of $G_1$ are generated by $(q\cdot q,p\cdot p, q\cdot p)$ which are quadratic and form a complete set of invariants. Therefore symplectic Runge--Kutta methods such as the midpoint rule applied to 
$X_{H\circ J_1}$, namely
$$\eqalign{
\dot q &= -q\times \nabla H(q\times p),\cr
\dot p &= -p\times \nabla H(q\times p)}$$
generate Lie--Poisson integrators on $\mathfrak{o}(3)^*$ for any $H$. The lifted coadjoint orbits are $\|q\times p\|^2=$ const., which are quartic invariants of $X_{H\circ J_1}$. However, from Theorem 1, we know that they are conserved by the integrator. 

The Hamiltonian vector fields of the invariants suggest that $J_1$ has a complete symmetry group
$G_2 = SL(2)$ with $J_2 = (q\cdot q,p\cdot p, q\cdot p)$, and linear action
$$\left(\matrix{a & b \cr c & d}\right)
\cdot(q,p) = (a q + b p,c q + d p)$$
which is the natural action of $SL(2)$ on $(T^*\R)^3$. The lifted Casimir
$$\|q\times p\|^2 = \|J_1(q,p)\|^2 = (q\cdot q)(p\cdot p)-(q\cdot p)^2$$
is collective for $J_2$---another way of seeing why it is conserved by the integrator.
\\ {\bf (b)}
It is interesting to ``dualize'' this example by considering Hamiltonians $H\circ J_2$. 
These are $O(3)$-invariant Hamiltonians such as classical central-force Hamiltonians $\|p\|^2/2+V(\|q\|^2)$. The invariants of $G_2$ are $J_1(q,p)=q\times p$ which are bilinear. The action of $G_1$ is transitive on the fibres of $J_2$ so we can use the symmetry group approach. The action of $G_1$ is a linear cotangent lift, hence from Theorem \ref{thm:transitive} partitioned symplectic Runge--Kutta methods, such as leapfrog, yield collective integrators for $J_2(M)\subset\mathfrak{sl}(2)^*$ on $J_2(M)$. Note $J_2$ is not surjective. The Casimir in $\mathfrak{sl}(2)^*\ni w$ is $C := w_1 w_2 - w_3^2$ and $C\circ J_2 = \|q\times p\|^2 \ge 0$, so $J_2(M) = \{w\in \mathfrak{sl}(2)^*: w_1\ge 0, \ w_2\ge 0, \ 
C \ge 0\}$ which is a solid cone. Figure \ref{fig:sl2} shows some orbits of Hamiltonian systems on $\mathfrak{sl}(2)^*$ (and
the coadjoint orbits they inhabit)
calculated using the collective leapfrog method. \end{example}

Example \ref{ex2}(b) is closely related to the symplectic reduction of $O(3)$-invariant Hamiltonians $H\circ J_2$. Indeed, {\em all} $O(3)$- (and all $SO(3)$-) invariant Hamiltonians are collective for $J_2$. Most treatments of reduction for this example (see, e.g., \cite{ma-ra}) do not involve $\mathfrak{sl}(2)^*$, the dual of the algebra of invariants, instead passing to a symplectic reduced space with coordinates $(\|p\|^2,q\cdot p)$ or ($\|q\|$,$\|p\|$). See \cite{le-mo-sj} for a treatment that features the algebra of invariants. A feature of the $\mathfrak{sl}(2)^*$ approach is that is allows one to see the relationship between orbits of different angular momentum and to visualize the orbits as intersections of energy and Casimir level sets.

\begin{figure}
\begin{center}
\includegraphics[width=8cm]{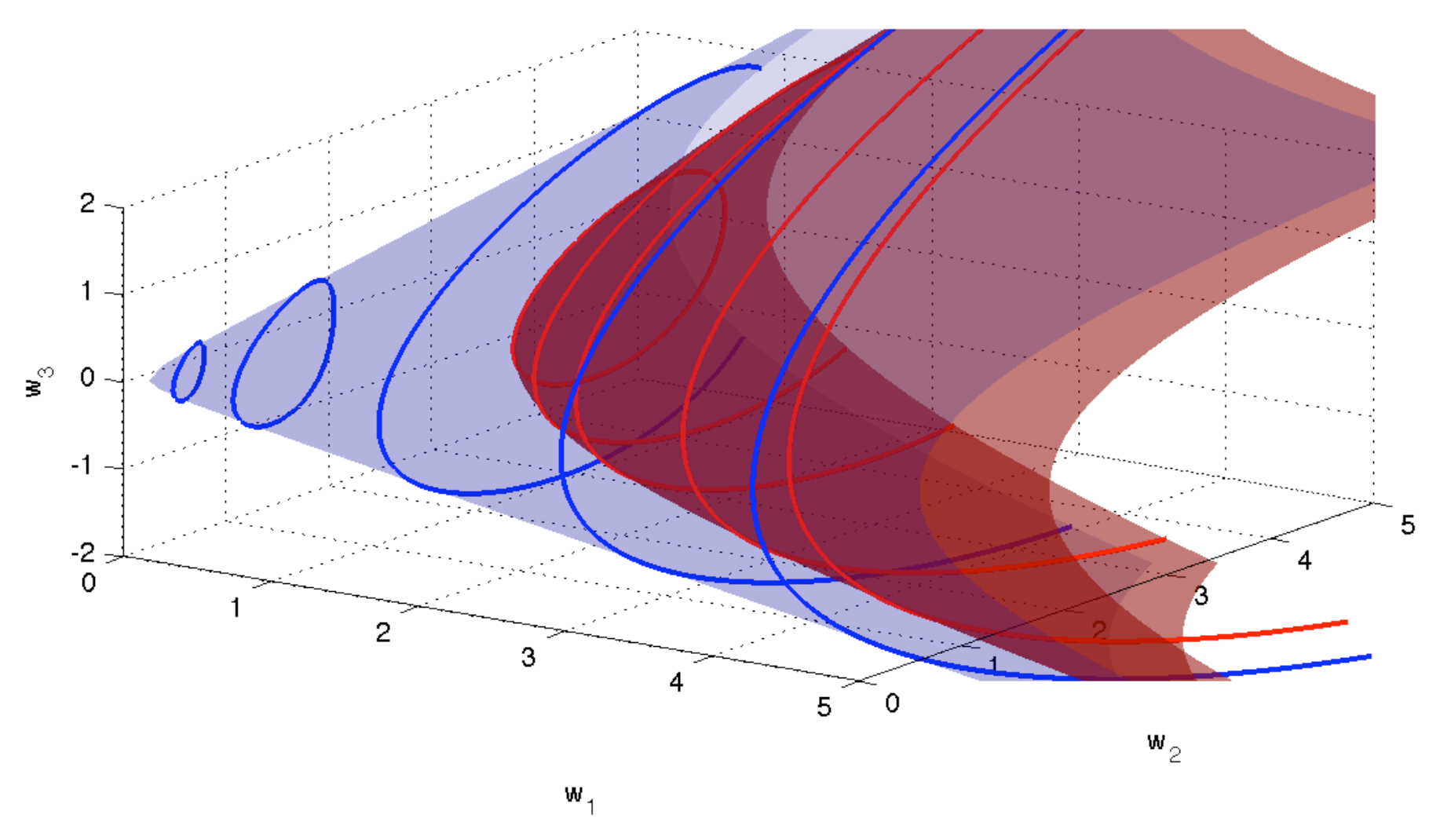}
\includegraphics[width=8cm]{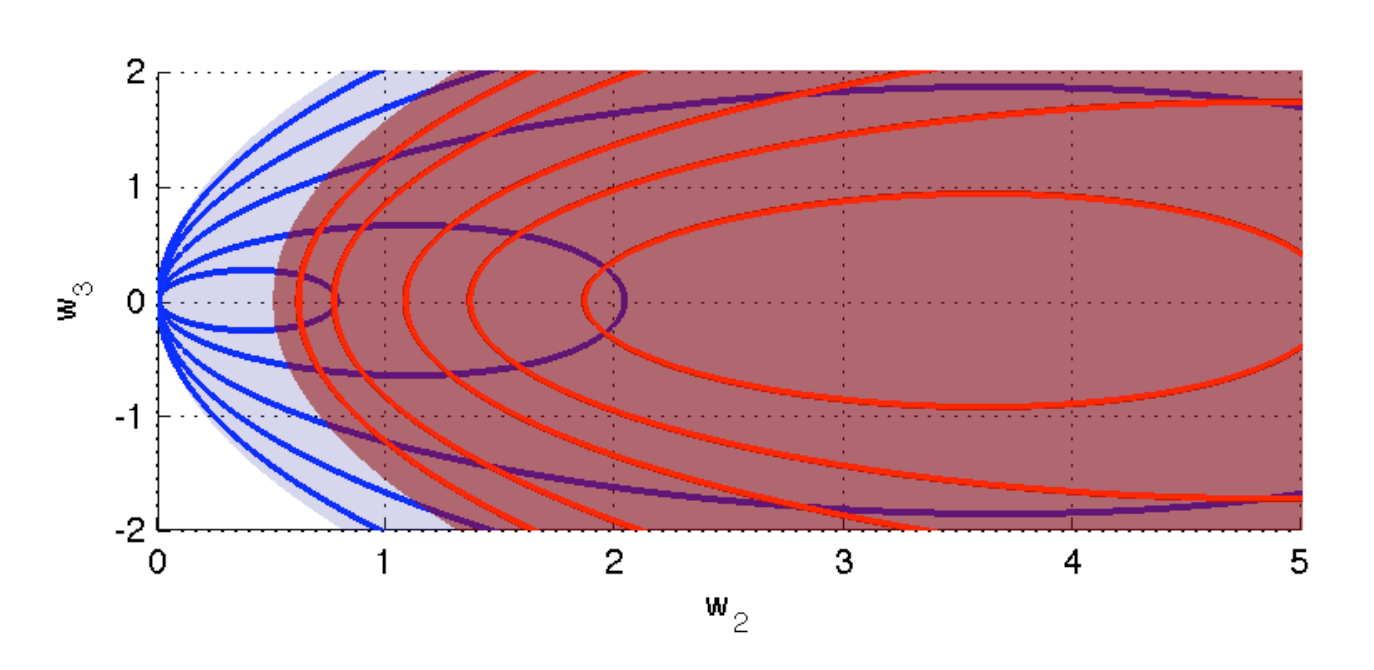}
\includegraphics[width=8cm]{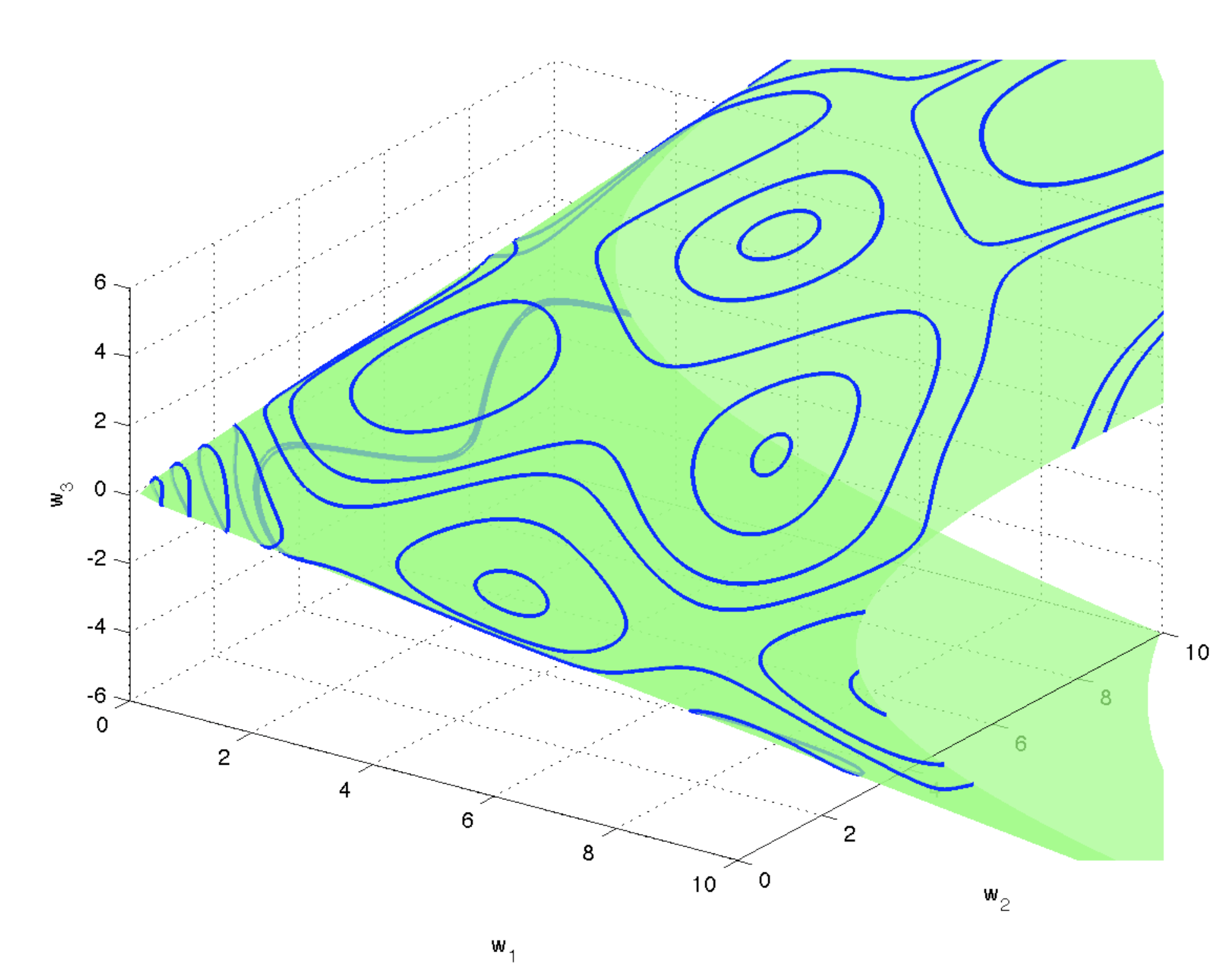}
\caption{\label{fig:sl2} Some orbits of Hamiltonian systems on $\mathfrak{sl}(2)^*$ and
the coadjoint orbits they inhabit. The coadjoint orbits are given by $w_1 w_2 - w_3^2 = C$; the cone $C=0$ and hyperboloid $C=2.5$ are shown. Top: Hamiltonian $H(w_1,w_2,w_3) = w_1 + \frac{1}{2}w_1^2 + \frac{1}{2}w_2$. The collective Hamiltonian for $J = (q\cdot q,p\cdot p,q\cdot p)$ is $H\circ J = \|q\|^2 + \frac{1}{2}\|q\|^4 + \frac{1}{2}\|p\|^2$. Calculated with the collective leapfrog method with $\Delta t = 0.01$. Middle: as before, showing the `standard' reduced variables $w_2$, $w_3$. Bottom: Hamiltonian 
$\sum_{i=1}^3 \cos w_i$ on $C=0$. The intersection of the energy and Casimir level sets creates a complex phase portrait.}
\end{center}
\end{figure}

\begin{figure}
\begin{center}
\includegraphics[width=8cm]{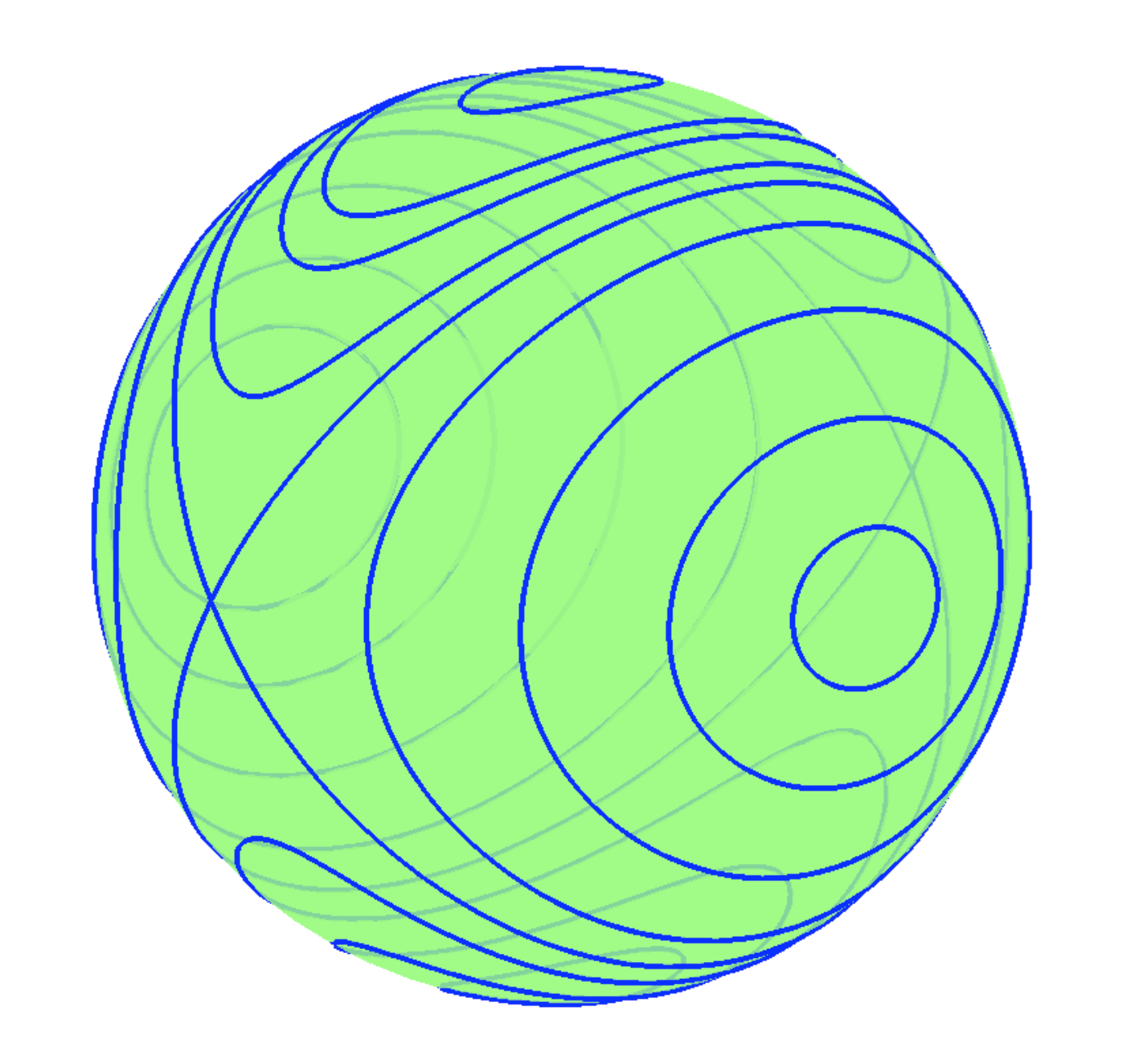}
\caption{\label{fig:so3a} Some orbits of the rigid body Hamiltonian $H = \frac{1}{2}w_1^2 + \frac{1}{4}w_2^2 + \frac{5}{3}w_3^2$ calculated using the 4-dimensional Hopf-fibration realization of $\mathfrak{so}(3)^*$ and the collective midpoint rule and time step 0.04.}
\end{center}
\end{figure}

\begin{figure}
\begin{center}
\includegraphics[width=7cm]{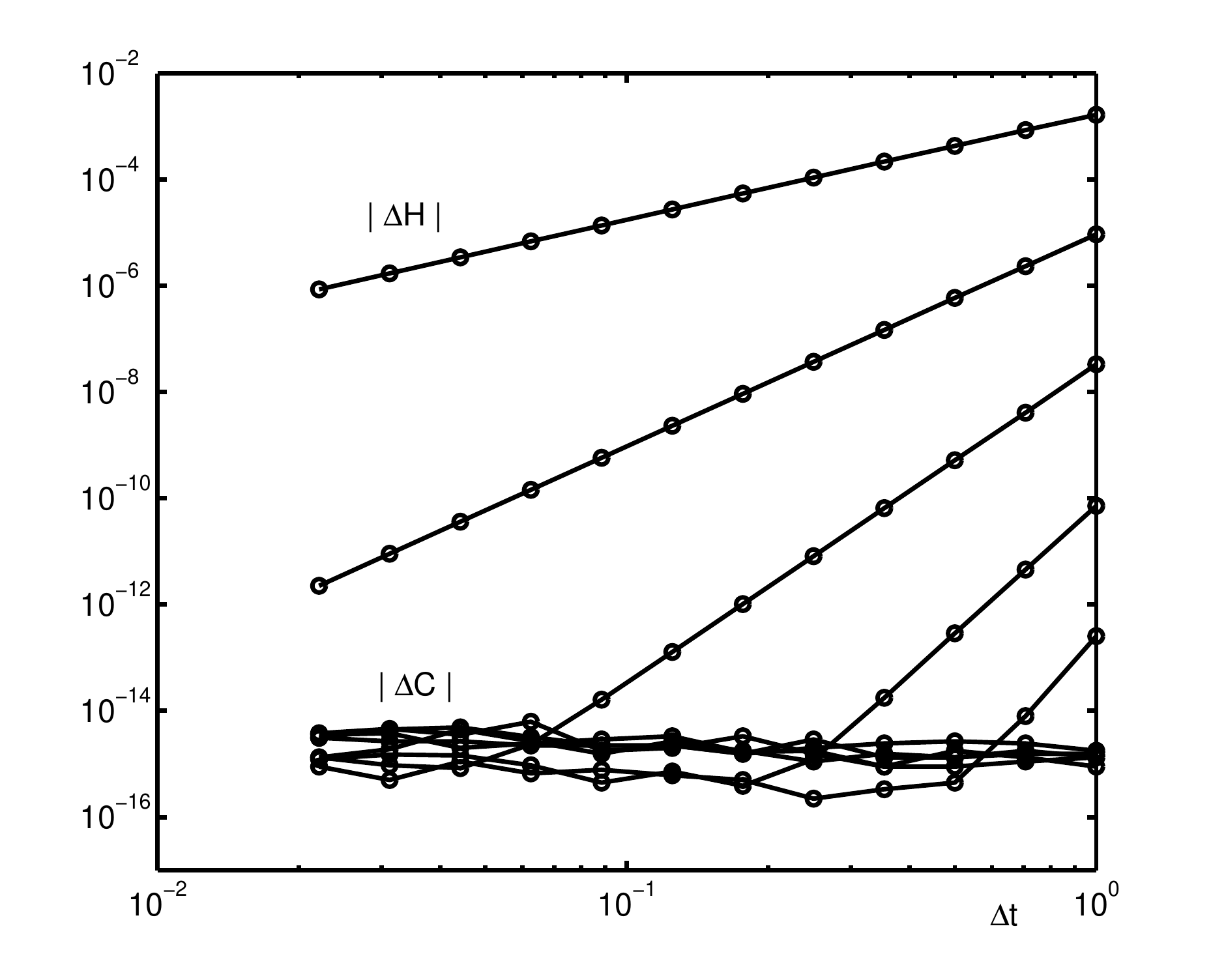}
\includegraphics[width=7cm]{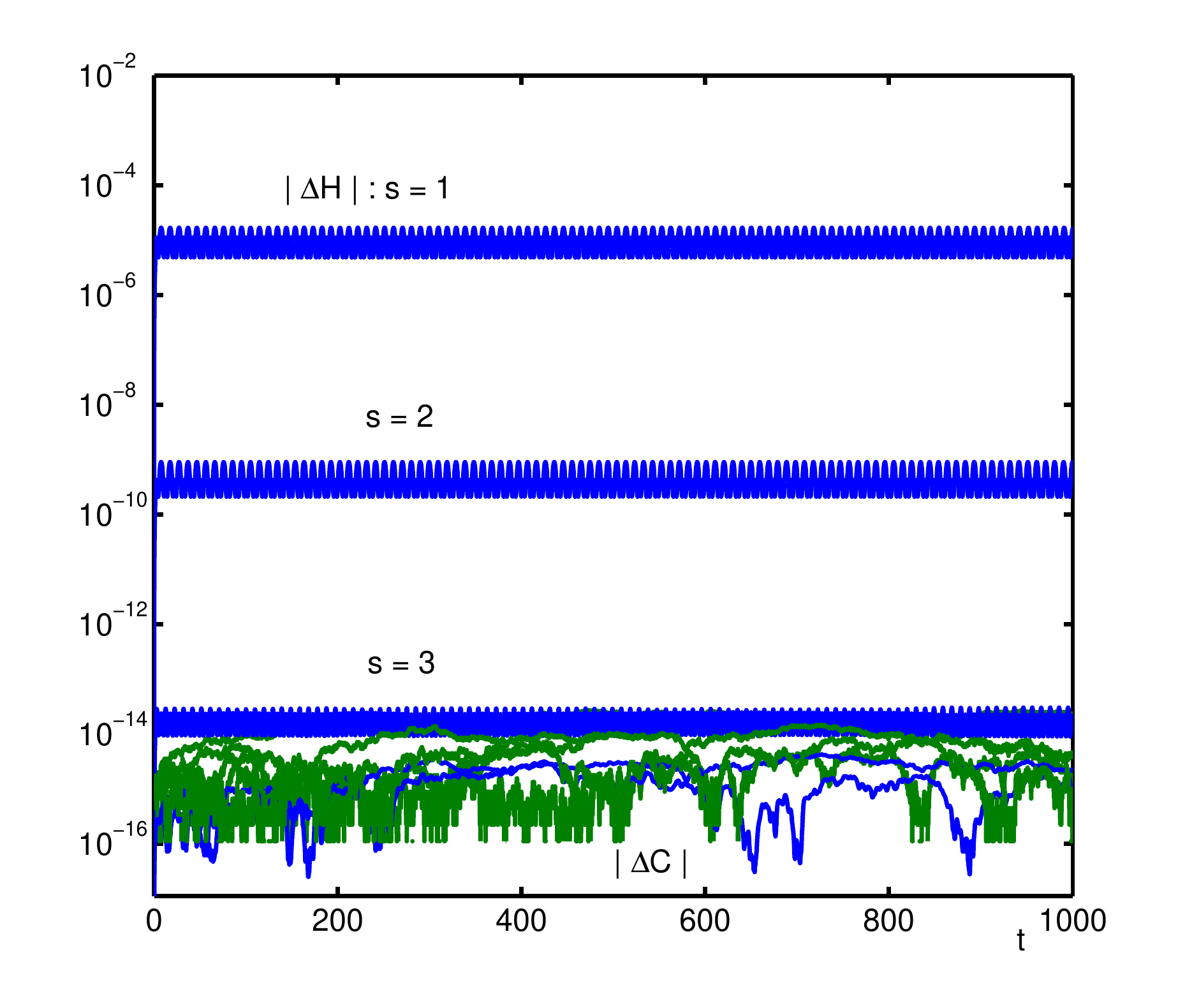}
\caption{\label{fig:so3grk} For the ODE in in Fig. \ref{fig:so3a}, illustrating the behaviour of Gauss Runge--Kutta methods 
with $s=1$--5 stages, orders 2--10. The errors in the Casimir $C=\|w\|^2$ are at roundoff and the errors in the Hamiltonian do not grow. Left: Integration time $t=20$, varying $\Delta t$; Right: $\Delta t = 0.1$, varying $t$.} 
\end{center}
\end{figure}

\begin{figure}
\begin{center}
\includegraphics[width=8cm]{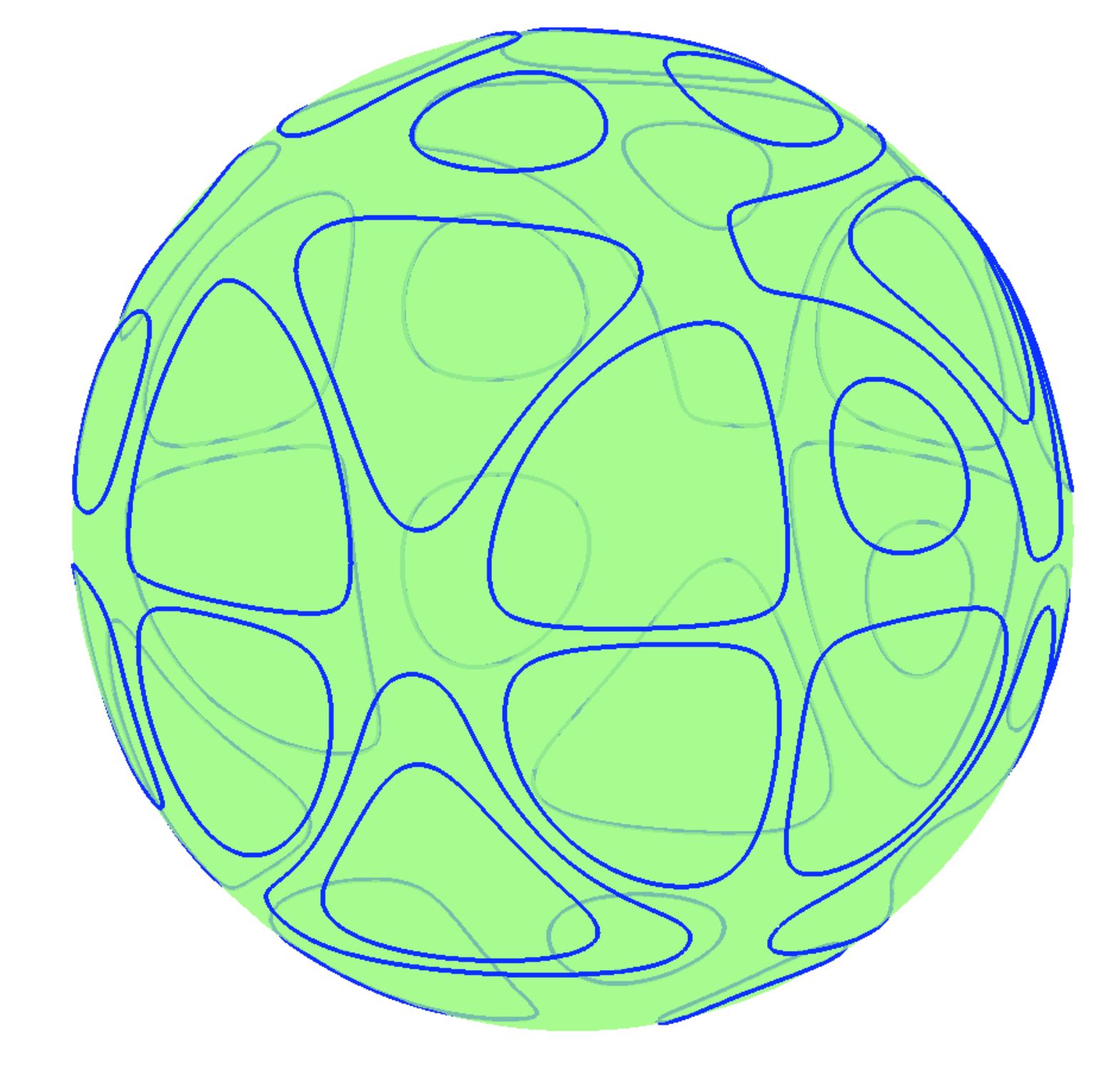}
\caption{\label{fig:so3b} Some orbits of the Hamiltonian $\prod_{i=1}^3\sin 4w_i$ calculated using the 4-dimensional Hopf-fibration realization of $\mathfrak{so}(3)^*$ and the collective midpoint rule with time step 0.01, shown on the coadjoint orbit $\|w\|=1$.}
\end{center}
\end{figure}

\begin{figure}
\begin{center}
\includegraphics[width=8cm]{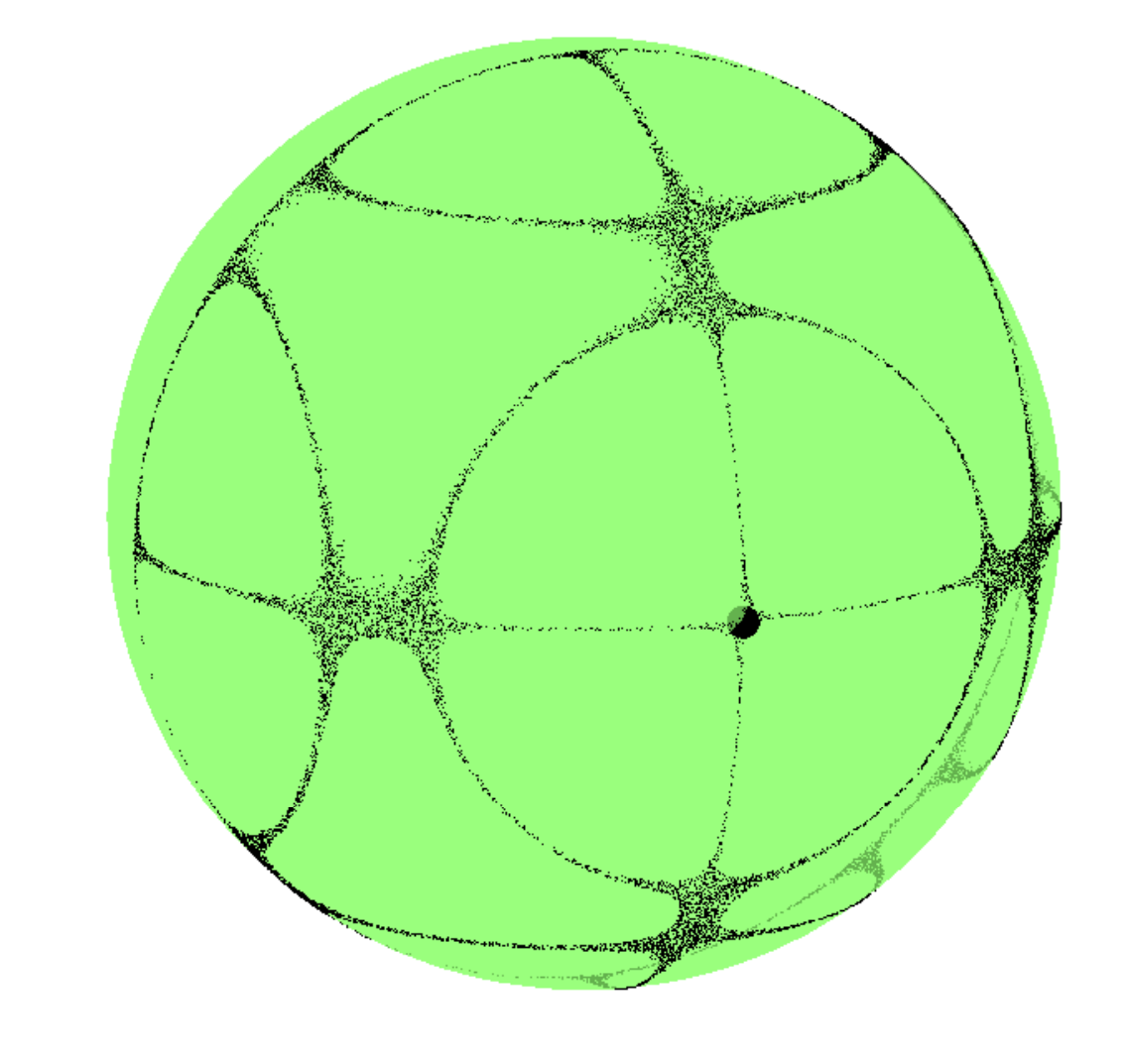}
\caption{\label{fig:so3b} 16000 iterations of the one-period map of the Hamiltonian $\prod_{i=1}^3\sin 4w_i + 0.01 w_1 \sin^2 t$ calculated using the 4-dimensional Hopf-fibration realization of $\mathfrak{so}(3)^*$ and the collective midpoint rule with time step $2\pi/30$, shown on the coadjoint orbit $\|w\|=1$. The marked point is the initial condition $w=(1,0,0)$. The orbit forms a ``chaotic web,'' a typical behaviour in 2-dimensional area-preserving dynamics.}
\end{center}
\end{figure}


\begin{example}{\bf (Hopf fibration realization of $\mathfrak{so}(3)^*$)}
\rm
\label{ex4}
For $\mathfrak{so}(3)^*$, the leaves are the origin (with 3-dimensional isotropy) and the 2-spheres $\|z\| = $const. (with 1-dimensional isotropy) so the minimum possible dimension to cover a sphere is 4. There is a canonical 4-dimensional realization, not just of the neighbourhood of a sphere, but of all of $\mathfrak{so}(3)^*$.
Let $G_1 = SU(2)$ with its natural action on $M={\mathbb C}^2$ which is canonical for $(z_1,z_2)= (q_1 + i p_1,q_2 + i p_2)\in M$. The momentum map $J\colon M\to \mathfrak{su}(2)^*\cong \mathfrak{so}(3)^*\cong {\mathbb C}\times \R\in(w_1+i w_2, w_3)$ is given by $J(z_1,z_2) = (\frac{1}{2} \bar z_1 z_2,\frac{1}{4}(|z_1|^2 - |z_2|^2))$ and is surjective with connected fibres 
(circles and points ${\rm e}^{{\rm i}\alpha}(z_1,z_2)$.) Hamilton's equations for $H\circ J_1$ are
$$ \eqalign{
\dot z_1 &= -\frac{1}{2}\left(i z_2 \frac{\partial H}{\partial w_1} + z_2 \frac{\partial H}{\partial w_2} +  i z_1 \frac{\partial H}{\partial w_3}\right)\cr
\dot z_2 &= \frac{1}{2}\left(- i z_1 \frac{\partial H}{\partial w_1} + z_1 \frac{\partial H}{\partial w_2} +  i z_2 \frac{\partial H}{\partial w_3}\right)\cr
}$$
The only independent invariant of $G_1$ is  $I := |z_1|^2 + |z_2|^2$ and it is quadratic and a complete invariant. Therefore from Theorem \ref{thm:onegroupint}(ii) symplectic Runge--Kutta methods applied to $H\circ J_1$ produce symplectic integrators for $\mathfrak{so}(3)^*$. Some examples are shown in Figs. \ref{fig:so3a}--\ref{fig:so3b}. Another application of these integrators is to generate smooth symplectic integrators for Hamiltonian systems on $S^2$ equipped with the Euclidean area form: extend the Hamiltonian smoothly to $\R^3$ and apply the collective midpoint rule. This gives a symplectic integrator on $S^2$ using 4 variables. Integrators based on canonical two-dimensional charts on $S^2$ use fewer variables, but are not, in general, smooth.
In general one can say that adding extra variables is one of very few fundamental tools available to get methods with new properties: the extra stages of Runge--Kutta methods (that allow, e.g., symplecticity) are an example. \end{example}

In Example \ref{ex4}, $J_1$ admits a complete symmetry group: letting $J_2=I$, $X_{J_2}$ consists of two harmonic oscillators with the same frequency and all orbits (except the origin) are circles, so $G_2 = S^1$. The action of $G_1$ is transitive on the $J_2$-fibres, and vice-versa, so Theorem \ref{thm:transitive} also shows that symplectic Runge--Kutta methods are collective in this example.  However, the full smooth centralizer of $G_1$ is $(\bar z_1z_2, |z_1|^2, |z_2|^2)$, not $J_1$, so $(G_1,G_2)$ do not form a dual pair. The geometry is that of 
the classical Hopf fibration: the $J_2$-fibres ($S^1$) lie in the $G_1$-orbits ($S^3$) giving rise to $S^3/S^1\cong S^2$. This special situation arises because $G_2$ is abelian and so $X_{J_2}$ is $G_2$-invariant. This example is considered in more detail in \cite{smallcollective}.

If the action has discrete isotropy (e.g., if it is free) at $x\in M$ then $J$ is a submersion at $x$ and $\dim J(M)=\dim\g^*$. The product action of $G$ on $M^n$ becomes free on an open subset of $M^n$ for $n$ sufficiently large for most effective group actions \cite{olver}; this will be one of our main tools to construct realizations. One should not take $n$ too large (because that would add too many extra variables) or too small (because that would prevent the action being free). 
This kind of action occurs in the following examples.

Howe \cite{howe} gives a classification of the irreducible reductive dual pairs in $Sp(V)$. 
There are just seven families of these, with $(G_1,G_2) = (GL(n,F),GL(m,F))$ where $F=\R$, $\C$, or  $\mathbb{H}$; $(O(p,q,F),Sp(2k,F))$ (the groups preserving a Hermitian (resp. skew-Hermitian) bilinear form); and $(U(p,q),U(r,s))$. It is straightforward to work out the momentum maps and their range for these dual pairs; we do this for three key dual pairs in the following examples.

\begin{example}\rm\label{ex:OSp}
Let $G_1 = O(n)$, $G_2=Sp(2k)$ and $M=T^*\R^{n\times k}$. We write $X=(Q,P)\in M$ and $\Omega = [0, I; -I, 0]$ for the Poisson structure matrix of $T^*\R^n$. The group actions and momentum maps are
$$\eqalign{
 &G_1\colon\quad A\cdot(Q,P) = (AQ,AP) =AX,\quad\quad J_1(X) = X \Omega X^T =  Q P^T - P Q^T;\cr
 &G_2\colon\quad B\cdot(Q,P) = (QB^T,PB^T) = X B^T,\quad J_2(X) = \Omega X^T X .\cr
 }$$
(Here the skew-symmetric matrix $X\Omega X^T$ pairs with an element of $\mathfrak{o(n)}$ via the standard basis, and the Hamiltonian matrix $\Omega X^T X$ pairs with an element of $\mathfrak{sp}(2k)$ via the standard basis.)
We first consider the range of $J_2$. Clearly $X^T X \in \R^{2k\times 2k}$
is a symmetric positive semidefinite matrix of rank at most $\min(2k,n)$. We claim
that it can be any such matrix. We use a method of classical invariant theory, see \cite{go-wa}, Chapter 5. Let $S$ be a symmetric positive semidefinite
$2k\times 2k$ matrix of rank $n$ where $n\le 2k$. Then $S=L^T D L$ where
$L$ and $D$ are $2k\times 2k$, $L$ is orthogonal, and $D$ is diagonal with diagonal entries $\lambda_1,\dots,\lambda_n,0,\dots,0$ where $\lambda_i>0$ for all $i$.
Define the matrix $X\in\R^{n\times 2k}$ by $X_{ij} = \sqrt{\lambda_i }L_{ij}$
for $i=1,\dots,n$, $j=1,\dots,2k$. Then $X^TX = S$. Thus, $J_2(M)$ is isomorphic
to the space of such matrices. The case $k=1$ was already considered in 
Ex. \ref{ex2}(b) and Figure \ref{fig:sl2}. We have $\dim J_2(M)=\dim \mathfrak{g}_2^*$ when $n\ge 2k$, but (because of the positivity restriction) $J_2$ is never surjective.

A similar argument shows that $J_1(M)$ consists of all antisymmetric $n\times n$ matrices of rank at most $2k$, now with no positivity restriction. Therefore $J_1$ is surjective when $2k\ge n$ (when $n$ is even) or $2k\ge n-1$ (when $n$ is odd).  The cases $n=2k$ and $n=2k+1$ are the most balanced as $J_1(M)$ and $J_2(M)$ are both top-dimensional and $\dim G_1 + \dim G_2 = \dim M$. One of these cases,
$n=3$ and $k=1$, is illustrated in Ex. \ref{ex1}.

This example provides a canonical realization of $\mathfrak{o}(n)^*$ of dimension $2n\lfloor\frac{n}{2}\rfloor$. The lower bound of Weinstein \cite{weinstein} for a realization
of a top-dimensional symplectic leaf is $\frac{1}{2}n(n-1)+\lfloor\frac{n}{2}\rfloor$, because $\dim \mathfrak{o}(n)^* = \frac{1}{2}n(n-1)$ and there are $\lfloor\frac{n}{2}\rfloor$ Casimirs $\tr W^{2i}$, $W\in\mathfrak{o}(n)^*$.

The case when $J(M)$ is a proper, even a low-dimensional subset of $\g^*$ may still be relevant if one wishes to integrate on particular symplectic leaf (as in the central force problem) or lower-dimensional symplectic leaves of $J$. The lowest non-zero dimensional leaves of $\mathfrak{o}(n)^*$, those with $\mathrm{rank}W= 2$, are isomorphic to $O(n)/(U(1)\times O(n-2))$ which has dimension $2n-4$. In this case taking $k=1$ (i.e., taking the canonical action of $O(n)$ on $T^*\R^n$) provides canonical variables for this symplectic manifold.

\end{example}

\begin{example}\rm
Let $G_1 = GL(n)$, $G_2 = GL(k)$, and $M=T^*\R^{n\times k}$.
The group actions and momentum maps are
$$\eqalign{
 & G_1\colon\quad A\cdot(Q,P) = (AQ,A^{-T}P)\,\quad J_1(Q,P) = Q P^T\cr
 & G_2\colon\quad B\cdot(Q,P) = (QB^T,PB^{-1}),\quad J_2(Q,P) = Q^T P\cr
 }$$
A similar calculation as in Ex. \ref{ex:OSp}, but using the singular value decomposition
instead of orthogonal diagonalisation, shows that $J_1(M)$ consists of
all $n\times n$ matrices of rank $\min(n,k)$  (hence $J_1$ is surjective when
$k\ge n$) and $J_2(M)$ consists of
all $k\times k$ matrices rank $\min(n,k)$ (hence $J_2$ is surjective when
$n\ge k$). They are both surjective when $n=k$. This case provides a full realization of $GL(n)$ of dimension $2n^2$, which can be compared to the lower bound
of Weinstein of $n^2 + n$ (there are $n$ Casimirs $\tr W^i$).

\end{example}

\providecommand{\Diff}{\mathrm{Diff}}
\providecommand{\Xcal}{\mathfrak{X}}
\providecommand{\met}{\mathsf{g}}
\providecommand{\LieD}{\mathcal{L}}
\providecommand{\vol}{\mu}
\providecommand{\pair}[1]{\langle #1 \rangle}
\providecommand{\trans}{T}

\begin{example}{\bf (Discretisation of $\Xcal(\R^{d})^*$ by landmarks)} \rm
	In this example we obtain a \emph{partial} realization of an infinite dimensional Lie--Poisson manifold by a finite dimensional symplectic vector space.
	The approach can be seen as a discretisation of the dual of the space of vector fields on $\R^{d}$.

	Consider the infinite dimensional algebra $\mathfrak{g}=\Xcal(\R^d)$ of vector fields on~$\R^d$.
	Formally, this is the Lie algebra of the group $G=\Diff(\R^d)$ if diffeomorphisms of~$\R^d$.
	The bracket on $\Xcal(M)$ is given by $(u,v)\mapsto -\LieD_u v$.
	
	Now, $\Diff(\R^d)$ acts on $q\in Q=\R^{dn}$ by $(q_1,\ldots,q_n)\mapsto \big(\phi(q_1),\ldots,\phi(q_n)\big)$.
	Notice that this is a nonlinear action.
	The corresponding cotangent lifted action on $M=T^*\R^{dn} \simeq \R^{2dn}$ is given by 
	\[
		(q_1,\ldots,q_n,p_1,\ldots,p_n) \mapsto \big(\phi(q_1),\ldots,\phi(q_n), T\phi(q_1)^{-\trans}p_1,\ldots,T\phi(q_n)^{-\trans}p_n\big).
	\]
	Since it is a cotangent lifted action, it is Hamiltonian.
	The corresponding momentum map $J\colon T^*\R^{dn}\to\Xcal(\R^{d})^*$ is given by
	\[
		J(q_1,\ldots,q_n,p_1,\ldots,p_n) = \sum_{i=1}^n p_i \delta(\,\cdot\, - q_i) .
	\]
	Thus, $J$ gives a partial realization of $\Xcal(\R^{d})^*$, which fulfills all the requirements in Theorem \ref{thm:onegroup}.
	
	Now, if $H\colon\Xcal(\R^d)^*\to \R$ is a Hamiltonian, consider the collective Hamiltonian $H\circ J$, which is now on a finite dimensional space symplectic vector space. 
	This example was first obtained in \cite{ho-ma}. 
	It is also described in terms of Clebsch variables in \cite{CoHo2009}.
	
	Integration of $X_{H\circ J}$ is a widely used technique in \emph{computational anatomy}, where the points in $\R^d$ are called landmarks \cite{mu-mi}. An exactly analogous example is already present in \cite{ma-we} in which $\Diff_{\rm vol}(\R^2)$ acts on the coadjoint orbit in $\Xcal^*_{\rm vol}(\R^2)$ consisting of point vortices to give the usual canonical description of point vortices.
\end{example}

\begin{example}\rm
\label{ex:bifol}
Some of the previous examples can be interpreted as instances of Theorem \ref{thm:isotropic}. For example, Example \ref{ex2}(b) can be constructed  by starting with $M=T^*\R^2$, $k=1$, and $f_1 = q\cdot p$.
The Hamiltonian vector field $X_{f_1}$ has 4 quadratic first integrals, namely $p_i q_j$ for $i,j=1,2$, so the midpoint
rule applied to $H(p_1 q_1, p_1 q_2, p_2 q_1, p_2 q_2)$ preserves $q\cdot p$ and descends to $M/X_{f_1}$. This approach does not identify the Poisson manifold. Using the independent invariants $p_1q_2$, $p_2 q_1$, and $p_1 q_1 - p_2 q_2$, i.e., $J$ in Example \ref{ex2}(b), identifies $M/X_{f_1}$ as the Lie--Poisson manifold $\mathfrak{sl}(2)^*$. Alternatively, it is possible
to extend $f_1$ by commuting functions so as to form a surjection $(f_1,f_2,f_3)$ whose fibres are the orbits of $X_{f_1}$---$(q\cdot p, p_1 q_2, p_2 q_1)$ will do---but again this does not identify the Poisson manifold. In this case
$\{f_2,f_3\} = \sqrt{f_1^2 - 4 f_2 f_3}$ so it is not obvious that we have contructed the Lie--Poisson manifold of $\mathfrak{sl}(2)^*$.

Similarly, the Hopf fibration (Example \ref{ex4}) can be constructed by starting with $M=T^*\R^2$, $k=1$, and $f_1 = \|q\|^2+\|p\|^2$.

Note that although $f_1,\dots,f_k$ must be quadratic, the invariants of $X_{f_i}$ need not be quadratic, and thus the foliations need not arise from a Howe dual pair. An example arises in $T^*\R^2$ with $k=1$ and $f_1 = a_1 q_1 p_1 + a_2 q_2 p_2$. The only quadratic invariants of $X_{f_1}$ (for generic $a_i$) are $q_1 p_1$ and $q_2 p_2$, but the orbits of $X_{f_1}$ are 1-dimensional, and the third invariant $q_1^{1/a_1} q_2^{-1/a_2}$ is not quadratic.

\end{example}

\section{Discussion}

The integrators presented here are undoubtedly the simplest possible symplectic integrators for general Hamiltonians on Lie-Poisson manifolds. The method is uniform in $H$, but not in $\mathfrak{g}^*$. 
This prompts several questions: Can $(M,J)$ with $J(M)=\mathfrak{g}^*$ be constructed algorithmically from $\mathfrak{g}^*$? Can it be done canonically? What is the minimum dimension of $M$? The second key requirement of the method is that the group orbits be fibres of quadratic functions (or their intersection with invariant polyhedral sets), so we can ask the same questions under this restriction.

\ack

O.~Verdier would like to acknowledge the support of the \href{http://wiki.math.ntnu.no/genuin}{GeNuIn Project}, funded by the \href{http://www.forskningsradet.no/}{Research Council of Norway},  as well as the hospitality of the \href{http://ifs.massey.ac.nz/}{Institute for Fundamental Sciences} of Massey University, New Zealand, where some of this research was conducted.
K.~Modin would like to thank the \href{http://www.math.ntnu.no}{Department of Mathematics at NTNU} in Trondheim and the \href{http://www.vr.se}{Swedish Research Council} for support. This research was supported by the 
 \href{http://wiki.math.ntnu.no/crisp}{CRiSP Project}, funded by the \href{http://cordis.europa.eu/fp7/home_en.html}{European Commission's Seventh Framework Programme}, and by the Marsden Fund of the Royal Society of New Zealand.



\end{document}